\def\th@plain{%
  \thm@notefont{}
  \itshape 
}
\def\th@definition{%
  \thm@notefont{}
  \normalfont 
}
\newcommand{\calG}{\mathcal{G}}
\newcommand{\id}{{\operatorname{id}}}  
\newcommand{\pr}{{\operatorname{pr}}} 
\newcommand{\toto}{{~\rightrightarrows~}} 
\newcommand{\dashto}{{\;\dashrightarrow\;}} 
\newtheorem*{rep@theorem}{\rep@title}
\newcommand{\newreptheorem}[2]{
\newenvironment{rep#1}[1]{
\def\rep@title{#2 ##1}
\begin{rep@theorem}}
{\end{rep@theorem}}}
\newcommand{\ifsection}[2]{\ifthenelse{\boolean{sections}}{#1}{#2}}
\theoremstyle{plain}
    \newtheorem{theorem}{Theorem}[section]
	\numberwithin{equation}{section}
	\numberwithin{figure}{section}
    \newtheorem{theorem}{Theorem}
\newtheorem{proposition}[theorem]{Proposition}
\newtheorem{lemma}[theorem]{Lemma}
\theoremstyle{definition}
\newtheorem{definition}[theorem]{Definition}
\newtheorem{remark}[theorem]{Remark}
\newcommand{\dd}{\mathsf{d}} 
\newcommand{\comment}[1]{}
\author{Dinamo Djounvouna}
\address{
Department of Mathematics, University of Manitoba, 
Winnipeg, MB, Canada 
}
\email{\href{mailto:djounvod@myumanitoba.ca}{djounvod@myumanitoba.ca}}
\author{Derek Krepski}
\address{
Department of Mathematics, University of Manitoba, 
Winnipeg, MB, Canada 
}
\email{\href{mailto:Derek.Krepski@umanitoba.ca}{Derek.Krepski@umanitoba.ca}}
\urladdr{\href{http://server.math.umanitoba.ca/~dkrepski/}{http://server.math.umanitoba.ca/\textasciitilde dkrepski/}}
\title[Bundle gerbe symmetries and Courant algebroids]{Infinitesimal symmetries of bundle gerbes and Courant algebroids}
\thanks{This work is partially supported by the  Natural Sciences and Engineering Research Council of Canada (RGPIN-2015-05833)}
\begin{document}

\begin{abstract}
Let $M$ be a smooth manifold and let $\chi\in \Omega^3(M)$ be closed differential form with integral periods. 
We show the Lie 2-algebra $\mathbb{L}(C_\chi)$ of sections of the $\chi$-twisted Courant algebroid $C_\chi$ on $M$ is quasi-isomorphic to the Lie 2-algebra of connection-preserving multiplicative vector fields on an $S^1$-bundle gerbe with connection (over $M$) whose 3-curvature is $\chi$.
\end{abstract}

\maketitle

\section{Introduction} \label{s:intro}

In  letters to A.\ Weinstein, P.\ \v{S}evera suggested that the infinitesimal symmetries of a \emph{Dixmier-Douady gerbe}, or $S^1$-gerbe, are closely related to  exact Courant algebroids  \cite{severa2017letters} (see also  \cite{bressler2005courant} where these ideas are further developed). 
In \cite{hitchin2003generalized,hitchin2006brackets}, Hitchin gives a construction of an exact Courant algebroid from the data of an $S^1$-gerbe over a manifold $M$, analogous to a construction of the Atiyah algebroid for principal $S^1$-bundles, which after a choice of splitting can be identified with the Courant algebroid $C_\chi=TM\oplus T^*M$ with \emph{$\chi$-twisted Courant bracket}, where $\chi\in \Omega^3(M)$ denotes the 3-curvature of the $S^1$-gerbe over $M$.

The relation to infinitesimal symmetries of $S^1$-gerbes was eventually made in Collier's PhD thesis \cite{collier-PhDThesis2012}. Viewing  $S^1$-gerbes $\calG$ as stacks (i.e., presheaves of groupoids), Collier identifies the infinitesimal symmetries of $S^1$-gerbes, showing they form a Lie 2-algebra\footnote{In this paper, Lie 2-algebras are 2-term $L_\infty$-algebras as in \cite{baez2004higher}.} $\mathcal{L}(\calG)$, and gives an alternate construction of an exact Courant algebroid $E_\calG$ (shown to be equivalent to that of Hitchin) from the data of the infinitesimal symmetries of  $\calG$.  As a consequence of \cite{roytenberg1998courant, roytenberg2002structure,roytenberg2007weak} (see also \cite{sheng2011semidirect} and \cite{rogers20132plectic}), the space of sections $\Gamma(E_\calG)$ can be given the structure of a Lie 2-algebra $\mathcal{L}(E_\calG)$, which Collier then shows to be quasi-isomorphic to the Lie 2-algebra $\mathcal{L}(\calG,\gamma)$ of infinitesimal symmetries of $\calG$ preserving a gerbe connection $\gamma$.

For $S^1$-gerbes $\calG$ with a connection $\gamma$ and curving $B$, Collier also considers the sub-Lie 2-algebra $\mathcal{L}(\calG;\gamma,B) \subset \mathcal{L}(\calG,\gamma)$ of infinitesimal symmetries preserving both $\gamma$ and $B$. 
 In \cite{fiorenza2014algebras}, Fiorenza, Rogers, and Schreiber give an interesting interpretation of this sub Lie 2-algebra and the Lie 2-algebra of all infinitesimal symmetries $\mathcal{L}(\calG)$. Specifically, they show that the natural sequence of  `forgetful' morphisms of Lie 2-algebras,
\begin{equation} \label{eq:seq}
\mathcal{L}(\calG;\gamma,B) \longrightarrow \mathcal{L}(\calG,\gamma) \longrightarrow \mathcal{L}(\calG),
\end{equation}
is equivalent  (via quasi-isomorphisms) to another sequence of Lie 2-algebra morphisms, which we recall next.

Consider the following three natural Lie 2-algebras  one may associate to a closed 3-form $\chi$ on a manifold $M$: the Lie 2-algebra of observables $\mathbb{L}(M,\chi)$, the Lie 2-algebra of sections of the $\chi$-twisted Courant algebroid $\mathbb{L}(C_\chi)$, and the \emph{skeletal} Lie 2-algebra $\mathbb{A}(M,\chi)$ associated to the representation of the Lie algebra of vector fields $\mathfrak{X}(M)$ on $C^\infty(M)$ with $C^\infty(M)$-valued 3-cocycle $\chi$ (called the \emph{Atiyah Lie 2-algebra} in \cite{fiorenza2014algebras}) --- see Section \ref{ss:lie2} for a brief review. 
There is a  sequence of morphisms of Lie 2-algebras,
\begin{equation} \label{eq:seq2}
\mathbb{L}(M,\chi) \longrightarrow \mathbb{L}(C_\chi) \longrightarrow \mathbb{A}(M,\chi),
\end{equation}
where the first morphism is an embedding defined  in  \cite{rogers20132plectic}, and the second is defined in \cite{fiorenza2014algebras}. In \emph{op.\ cit.}, the authors show that when $\chi$ is the 3-curvature of an $S^1$-gerbe $\calG$ with connection $\gamma$ and curving $B$, the sequence \eqref{eq:seq} is equivalent to the sequence \eqref{eq:seq2} via quasi-isomorphisms, where the middle quasi-isomorphism is that from \cite{collier-PhDThesis2012}.

There a several models for $S^1$-gerbes in the literature, and descriptions of infinitesimal symmetries for $S^1$-gerbes  thus depend on the choice of model. As stated above, for gerbes as presheaves of groupoids, infinitesimal symmetries are described in \cite{collier-PhDThesis2012}. In \emph{op.\ cit.}, Collier also describes infinitesimal symmetries for $S^1$-gerbes given in terms of \emph{\v{C}ech data}---i.e., Hitchin-Chatterjee gerbes \cite{hitchin2001lectures,chatterjee1998construction} or equivalently $S^1$-bundle gerbes \cite{murray1996bundle} where the underlying submersion is a covering by a disjoint union of open subsets. 
In \cite{fiorenza2014algebras}, Fiorenza, Rogers, and Schreiber,  in the more general \emph{higher structures} context,  describe the infinitesimal symmetries of $(n-1)$-bundle gerbes (or principal $U(1)$-$n$-bundles) viewed as \v{C}ech-Deligne cocycles. When $n=2$, these are  equivalent to Hithcin-Chaterjee gerbes, and the resulting Lie 2-algebras of infinitesimal symmetries  are essentially equivalent to those in \cite{collier-PhDThesis2012}.  

The perspective used in this paper is that from \cite{krepski2022multiplicative}, where infinitesimal symmetries of  $S^1$-bundle gerbes are modelled by \emph{multiplicative vector fields} on Lie groupoids.  In this viewpoint, an $S^1$-bundle gerbe $\calG$ over a manifold $M$ is an $S^1$-central extension of Lie groupoids $P \toto X$ of the submersion groupoid $X\times_M X \toto X$ associated to a surjective submersion $X\to M$.
 Multiplicative vector fields on a Lie groupoid form a category \cite{mackenzie1998classical}, and this category is a naturally a Lie 2-algebra  \cite{berwick2020lie}.  For an $S^1$-bundle gerbe $\calG = \{ P\toto X \}$, 
 we thus obtain a Lie 2-algebra of infinitesimal symmetries $\mathbb{X}(\calG)$ consisting of multiplicative vector fields on $P\toto X$.  When $\calG$ is equipped with a connection $\gamma$ and a curving $B$, multiplicative vector fields preserving $\gamma$ (resp. both $\gamma$ and $B$) in an appropriate `weak' sense form a Lie 2-algebra $\mathbb{X}(\calG,\gamma)$ (resp.\ $\mathbb{X}(\calG;\gamma,B)$)---see  Proposition \ref{p:lie2-connexpres} and \cite{krepski2022multiplicative}  for details. 
 For bundle gerbes  where $X$ is a disjoint union of open subsets of $M$, these Lie 2-algebras agree with those in \cite{collier-PhDThesis2012}.

We now describe the main contributions of this paper. 
In Theorem \ref{t:butterflyCourant}, stated for general $S^1$-bundle gerbes, we prove the expected analogue of Collier's quasi-isomorphism described above for Hitchin-Chaterjee gerbes. We work in Noohi's bicategory of Lie 2-algebras, with \emph{butterfly} morphisms \cite{noohi2013integrating}, and the desired quasi-isomorphism is realized as an invertible butterfly (see Section \ref{s:courant} for details). 

\begin{reptheorem} {\ref{t:butterflyCourant}'}
Let $\calG$ be an $S^1$-bundle gerbe with connection $\gamma$ over a manifold $M$. A choice of curving $B$ determines an invertible butterfly $\mathsf{F}: \mathbb{X}(\calG,\gamma) \dashto \mathbb{L}(C_\chi)$, where $\chi$ denotes the 3-curvature of the connection and curving.
\end{reptheorem}

Similarly, in Theorem \ref{t:atiyah} we show:

\begin{reptheorem}{\ref{t:atiyah}'}
Let $\calG$ be an $S^1$-bundle gerbe over a manifold $M$. A choice of connection $\gamma$ and curving $B$ determines an invertible butterfly $\mathsf{G}:\mathbb{X}(\calG) \dashto \mathbb{A}(M,\chi)$, where $\chi$ denotes the 3-curvature of the connection and curving.
\end{reptheorem}

In Propositions \ref{p:compatibility_gauge_transf} and \ref{p:atiyah_compatibility_gauge_transf} we show that the quasi-isomorphisms given by the invertible butterflies above have the expected compatibility with \emph{gauge transformations}, $\chi \mapsto \chi + d\tau$, $\tau \in \Omega^2(M)$, that accordingly alter the curving and 3-curvature of the bundle gerbe.

For multiplicative vector fields on a bundle gerbe $\calG$ with connection $\gamma$ and curving $B$, the natural sequence of `forgetful' morphisms
\[
\mathbb{X}(\calG;\gamma,B) \to \mathbb{X}(\calG, \gamma) \to \mathbb{X}(\calG)
\]
analogous to sequence \eqref{eq:seq} is shown here to be equivalent to the sequence of morphisms \eqref{eq:seq2}.  Indeed, the quasi-isomorphisms are supplied by \cite[Theorem 5.1]{krepski2022multiplicative}, Theorem \ref{t:butterflyCourant} , and Theorem \ref{t:atiyah}, while the desired 2-commutative diagrams follow from Propositions \ref{p:CourantRogersKVCompat} and \ref{p:FRScompat}.

As an application, we present in Section \ref{s:moment-gauge} a geometric argument analogous to one appearing in \cite{miti2022observables} in the symplectic case, showing Rogers' embedding of Lie 2-algebras (the first morphism in \eqref{eq:seq2}) is compatible with gauge transformations $\chi \mapsto \chi+d\tau$ after pulling back to a finite dimensional Lie algebra along a homotopy moment map.

\bigskip

\subsection*{Organization of the paper}

We recall some preliminaries in Section 2 on Lie 2-algebras in 2-plectic geometry and symmetries of $S^1$-bundle gerbes. Section 3 contains the main results in this paper, namely Theorems \ref{t:butterflyCourant} and \ref{t:atiyah}, as well as the compatibility of those results with gauge transformations $\chi \mapsto \chi + d\tau$. Finally, in Section 4 we give a geometric discussion analogous to one in \cite{miti2022observables} on the behaviour of the  Lie 2-algebra of observables under gauge transofrmation, as an application of the results in Section 3.

\section{Preliminaries} \label{s:prelim}

In this section we recall some background on Lie 2-algebras appearing in 2-plectic geometry, and some preliminaries on $S^1$-bundle gerbes and their infinitesimal symmetries.  We shall assume the reader is familiar with Lie 2-algebras---that is 2-term $L_\infty$-algebras as in \cite{baez2004higher}.  We localize Lie 2-algebras at weak equivalences (quasi-isomorphisms) and work within Noohi's bicategory of Lie 2-algebras, with \emph{butterflies} as 1-morphisms. We refer to Noohi's paper \cite{noohi2013integrating} for details, or \cite[Section 4.1]{krepski2022multiplicative} for a brief review of Lie 2-algebras.

\subsection{Lie 2-algebras in 2-plectic geometry} \label{ss:lie2}
In this subsection, we briefly recall three Lie 2-algebras naturally associated to closed 3-forms on smooth manifolds: the Poisson-Lie 2-algebra of observables, the Lie 2-algebra of sections of an exact Courant algebroid (see \cite{rogers20132plectic}), and the \emph{Atiyah Lie 2-algebra} of \cite{fiorenza2014algebras}. 

We begin with the (pre)-2-plectic analog of the Poisson algebra of observables on a symplectic manifold.

\begin{definition} \label{d:poisson-lie}
Let $M$ be a manifold, and let $\chi \in \Omega^3(M)$ be closed.
The \emph{Poisson-Lie 2-algebra (of observables)} $\mathbb{L}(M,\chi)$ is the Lie 2-algebra with underlying 2-term complex 
\[
C^\infty(M) \to \{(x,\beta) \in \mathfrak{X}(M) \times \Omega^1(M) \, | \, \iota_x \chi = -d\beta \},
\]
with differential $\dd f = (0,df)$; the bracket is given by
\[
[({x}_1,\beta_1),({x}_2,\beta_2)]=([{x}_1,{x}_2],\iota_{{x}_2}\iota_{{x}_1}\chi)
\]
in degree 0 and zero otherwise; the Jacobiator 
is given by
\[
J({x}_1,\beta_1;{x}_2,\beta_2;{x}_3,\beta_3) = - \iota_{{x}_3}\iota_{{x}_2}\iota_{{x}_1} \chi.
\]
\end{definition}

Recall that a closed 3-form $\chi$ on a manifold $M$ gives rise to an exact Courant algebroid,
$C_\chi=TM \oplus T^*M$ with $\chi$-twisted Courant bracket  \cite{severa2001poisson}. As noted in \cite{rogers20132plectic} (see also \cite{sheng2011semidirect}), it follows from Rotyenberg and Weinstein \cite{roytenberg1998courant} that sections of $C_\chi$ form a Lie 2-algebra, which is reviewed in the following definition.

\begin{definition} \label{d:courant}
Let $M$ be a manifold, and let $\chi\in \Omega^3(M)$ be closed. The \emph{Courant Lie 2-algebra} $\mathbb{L}(C_\chi)$ is the Lie 2-algebra with underlying 2-term complex 
given by
\[
C^\infty(M) \to \Gamma(TM\oplus T^*M)
\]
with differential $\dd f = (0,df)$; the bracket is given by
\[
[(u,\alpha),(v,\beta)] = ([u,v],L_u\beta - L_v \alpha - \frac{1}{2} d (\iota_u \beta - \iota_v \alpha) - \iota_v \iota_u \chi )
\]
in degree 0,
while in mixed degrees, we have
\[
[(u,\alpha),f]=-[f,(u,\alpha)] = \frac{1}{2} \iota_u df \, .
\]
The Jacobiator is given by
\[
J(u_1,\alpha_1;u_2,\alpha_2;u_3,\alpha_3) = -\frac{1}{6} \left( \langle [(u_1,\alpha_1),(u_2,\alpha_2)],(u_3,\alpha_3) \rangle^+
+ \text{cyc. perm.} \right) \, .
\]
\end{definition}

The notation $\langle -,-\rangle^+$ in Definition \ref{d:courant} denotes the standard symmetric pairing on $\Gamma(TM \oplus T^*M)$, $\langle (u,\alpha),(v,\beta) \rangle^+ = \iota_u \beta + \iota_v\alpha$.

Finally, we recall the construction of the  \emph{Atiyah Lie 2-algebra} associated to a manifold $M$ equipped with a closed 3-form $\chi$---namely, the skeletal Lie 2-algebra (see \cite{baez2004higher}) associated to the Lie algebra representation of $\mathfrak{X}(M)$ on $C^\infty(M)$ and $C^\infty(M)$-valued 3-cocycle $\chi$. More explicitly, 

\begin{definition} \label{d:atiyah} \cite{fiorenza2014algebras}
Let $M$ be a manifold, and let $\chi \in \Omega^3(M)$ be closed. The \emph{Atiyah Lie 2-algebra} $\mathbb{A}(M,\chi)$ is the Lie 2-algebra  with underlying 2-term complex 
\[
C^\infty(M) \to \mathfrak{X}(M)
\]
with zero differential $\dd(f) =0$, and bracket given by  Lie bracket of vector fields in degree 0 and Lie derivative $[{x},f]=-[f,{x}]=L_{x} f$ in mixed degree. The Jacobiator is given by 
\[
J({x}_1,{x}_2,{x}_3) = - \iota_{{x}_3}\iota_{{x}_2}\iota_{{x}_1} \chi.
\]
\end{definition}

\subsection{Bundle gerbes and their infinitesimal symmetries} \label{ss:gerbes}

\subsubsection*{Bundle gerbes and connective structures}
\label{ss:gerbes-inf}
We begin with a brief review of $S^1$-bundle gerbes to establish the perspective and notation. See \cite{behrend2011differentiable} for further details.

Recall that an $S^1$-\emph{bundle gerbe} $\calG$ over a manifold $M$ is an $S^1$-central extension of the submersion groupoid $X\times_M X \toto X$, where $\pi: X\to M$ is a surjective submersion. In more detail, this consists of a morphism of Lie groupoids
\[
\xymatrix{
P \ar@<-.7ex>[d]\ar@<.7ex>[d] \ar[r] & X\times_M X \ar@<-.7ex>[d]\ar@<.7ex>[d] \\
X \ar@{=}[r] & X
}
\]
and a left $S^1$-action on $P$ making $P\to X\times_M X$ a principal $S^1$-bundle
such that the $S^1$-action on $P$ is compatible with the groupoid multiplication:
\[
(zp)\cdot (wq) = (zw)(p\cdot q)
\]
for all composable $p,q \in P$ and $z,w\in S^1$.

A \emph{connection} on an $S^1$-bundle gerbe $\calG$ over $M$ is a connection 1-form $\gamma \in \Omega^1(P)$ that is multiplicative (i.e., $m^*\gamma = \pr_1^*\gamma+ \pr_2^*\gamma$, where $m:P\times_X P \to P$ denotes the Lie groupoid multiplication on $P$ and $\pr_1, \pr_2$ denote the obvious projections). Equivalently, letting $\delta$ denote the simplicial differential on the simplicial manifold $P_\bullet$ associated to the Lie groupoid $P\toto X$, we see a connection 1-form $\gamma$ on $P$ defines a connection on $\calG$ whenever $\delta \gamma =0$.

Given a connection $\gamma$ on $\calG$, a \emph{curving} for $\gamma$ is a 2-form $B\in \Omega^2(X)$ such that $\delta B = d\gamma$. In this case, we say the pair $(\gamma, B)$ defines a \emph{connective structure} on $\calG$. The 3-curvature of the connective structure $(\gamma,B)$ is the 3-form $\chi \in \Omega^3(M)$ satisfying $\pi^*\chi = dB$.

\begin{remark} \label{r:gauge-curving}
Observe that for a fixed connection $\gamma$ on an $S^1$-bundle gerbe $\calG$ over $M$, the set of curvings for $\gamma$ is a $\Omega^2(M)$-torsor: indeed, any two curvings $B$, $B'$ satisfy $\delta(B-B') = 0$ and thus there exists a unique 2-form $\tau \in \Omega^2(M)$ with $\pi^*\tau=B-B'$. Moreover, if $\chi$ denotes the 3-curvature of the connective structure $(\gamma,B)$, then $\chi+d\tau$ is the 3-curvature of the connective structure $(\gamma, B+\pi^*\tau)$.
\end{remark}

\subsubsection*{Multiplicative vector fields on bundle gerbes}

In \cite{krepski2022multiplicative}, infinitesimal symmetries of $S^1$-bundle gerbes were modelled with multiplicative vector fields on Lie groupoids, which naturally come with the structure of a Lie 2-algebra (\emph{cf}.\ \cite{berwick2020lie}). In particular, \cite{krepski2022multiplicative} considers multiplicative vector fields on an $S^1$-gerbe $\calG$ over $M$ that preserve a connective structure $(\gamma, B)$ on $\calG$. In this work, we consider multiplicative vector fields on $\calG$ that preserve the connection $\gamma$ (but not necessarily a curving $B$).

First, we briefly review the (strict) Lie 2-algebra of multiplicative vector fields on a general Lie groupoid $\mathbf{G}=\{G_1\toto G_0\}$.
Recall from \cite{mackenzie1998classical} that a \emph{multiplicative vector field} on a Lie groupoid $\mathbf{G}$ is a functor $\mathbf{x}:\mathbf{G} \to T\mathbf{G}$ such that $\pi_{\mathbf{G}}\circ \mathbf{x} = \id_{\mathbf{G}}$, where $\pi_{\mathbf{G}}:T\mathbf{G} \to \mathbf{G}$ denotes the tangent bundle projection. Such a functor $\mathbf{x}$ therefore consists of a pair of vector fields $(\mathbf{x}_0,\mathbf{x}_1) \in \mathfrak{X}(G_0) \times \mathfrak{X}(G_1)$ that are compatible with units and the groupoid multiplications on $\mathbf{G}$ and $T\mathbf{G}$. Denote the multiplicative vector fields on $\mathbf{G}$ (viewed as pairs of vector fields $(\mathbf{x}_0,\mathbf{x}_1)$ as above) by $\mathbb{X}(\mathbf{G})_0$.

Let $A=\ker ds \big|_{G_0}$ denote the Lie algebroid of $\mathbf{G}$, with anchor $dt:A\to TG_0$. A section $a\in \Gamma(A)$ gives rise to a multiplicative vector field as follows. Let ${\mathbf{a}} = dt(a)$ and ${\bar{\mathbf{a}}} = \overrightarrow{a} + \overleftarrow{a}$, where
\[
\overrightarrow{a}(g)=dR_g(a(t(g))) \quad \text{and} \quad \overleftarrow{a}(g) = d(L_g \circ i)(a(s(g))).
\]
Here, $L_g$ and $R_g$ denote left and right multiplication, respectively, by $g\in G_1$, and $i: G_1\to G_1$ denotes inversion. It follows that $({\mathbf{a}},{\bar{\mathbf{a}}})$ is a multiplicative vector field \cite[Example 3.4]{mackenzie1998classical}, and we may
obtain a Lie 2-algebra $\mathbb{X}(\mathbf{G})$, with underlying 2-term complex
\[
\Gamma(A) \longrightarrow \mathbb{X}(\mathbf{G})_0
\]
and differential $\dd a =({\mathbf{a}},{\bar{\mathbf{a}}})$.
The bracket of elements in degree 0 is given on components:
$$
[({\mathbf{x}_0}, {\mathbf{x}_1}), ({\mathbf{x}_0'}, {\mathbf{x}_1'})] = ([{\mathbf{x}_0},{\mathbf{x}_0'}], [{\mathbf{x}_1},{\mathbf{x}_1'}]),
$$
while in mixed degrees, it is given by
$$
[({\mathbf{x}_0}, {\mathbf{x}_1}),a] =-[a,({\mathbf{x}_0}, {\mathbf{x}_1})]= [{\mathbf{x}_1},\overrightarrow{a}]\big|_{G_0}.
$$
Here, recall that for $a\in \Gamma(A)$, and $({\mathbf{x}_0},{\mathbf{x}_1})$ multiplicative, $[{\mathbf{x}_1},\overrightarrow{a}] \in \ker ds$ and is right-invariant \cite{mackenzie1998classical}, and hence its restriction to $G_0$ defines a section in $\Gamma(A)$.

\medskip

Let $\calG$ be an $S^1$-bundle gerbe on a manifold $M$, where $\mathbf{P}=\{P\toto X\}$ denotes the underlying $S^1$-central extension. Let $\gamma$ be a connection on $\calG$.
Recall from \cite[Remark 3.15]{krepski2022multiplicative} that a multiplicative vector field $(\mathbf{x},\mathbf{p}) \in \mathbb{X}(\mathbf{P})_0$ (weakly) \emph{preserves the connection} $\gamma$ if $L_\mathbf{p} \gamma = \delta \alpha$ for some $\alpha \in \Omega^1(X)$.

\begin{lemma} \label{l:morphs-connex-pres}
Let $(\calG,\gamma)$ be an $S^1$-bundle gerbe with connection on a manifold $M$. Let $a\in \Gamma(A_P)$, where $A_P\to X$ denotes the Lie algebroid of the underlying Lie groupoid $P \toto X$. Then for any curving $B$, $L_{\bar{\mathbf{a}}} \gamma = \delta(\iota_{\mathbf{a}} B - d\mathsf{v}_a)$, where $\mathsf{v}_a = \epsilon^* \iota_{\overrightarrow{a}} \gamma$ (here, $\epsilon$ denotes the unit map for $P \toto X\times_M X$). Moreover, $\iota_{\mathbf{a}} B - d\mathsf{v}_a$ is independent of the choice of curving $B$.
\end{lemma}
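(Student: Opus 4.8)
The plan is to apply Cartan's magic formula, $L_{\bar{\mathbf a}}\gamma=\iota_{\bar{\mathbf a}}\,d\gamma+d\,\iota_{\bar{\mathbf a}}\gamma$, and to treat the two terms using the two defining relations of the connective structure, namely $d\gamma=\delta B$ and the multiplicativity $\delta\gamma=0$. I will repeatedly use the standard fact that, since $a\in\Gamma(A_P)=\Gamma(\ker ds|_X)$ has anchor $\mathbf a=dt(a)$, the right-invariant field $\overrightarrow a$ is tangent to the $s$-fibres and is $t$-related to $\mathbf a$, while $\overleftarrow a$ is tangent to the $t$-fibres and is $s$-related to $\mathbf a$. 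Consequently $\bar{\mathbf a}=\overrightarrow a+\overleftarrow a$ is simultaneously $s$- and $t$-related to $\mathbf a$, i.e.\ $ds(\bar{\mathbf a})=\mathbf a\circ s$ and $dt(\bar{\mathbf a})=\mathbf a\circ t$.

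For the first term, I would substitute $d\gamma=\delta B$, which is built from $s^*B$ and $t^*B$. Because $\bar{\mathbf a}$ is $s$- and $t$-related to $\mathbf a$, contraction commutes with the pullbacks, $\iota_{\bar{\mathbf a}}s^*B=s^*(\iota_{\mathbf a}B)$ and $\iota_{\bar{\mathbf a}}t^*B=t^*(\iota_{\mathbf a}B)$, so that $\iota_{\bar{\mathbf a}}\,d\gamma=\delta(\iota_{\mathbf a}B)$.

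The heart of the argument, and the step I expect to be the main obstacle, is the identity $\iota_{\bar{\mathbf a}}\gamma=-\delta\mathsf v_a$, whence $d\,\iota_{\bar{\mathbf a}}\gamma=-\delta(d\mathsf v_a)$ since $\delta$ commutes with $d$. I would establish it one summand at a time by differentiating the multiplicativity relation $m^*\gamma=\pr_1^*\gamma+\pr_2^*\gamma$. Realizing $\overrightarrow a(g)$ as $dm$ of the tangent vector $(a(t(g)),0)$ to the composable pair $(\epsilon(t(g)),g)$, multiplicativity reduces $\gamma(\overrightarrow a)$ at $g$ to $\gamma_{\epsilon(t(g))}(a(t(g)))=\mathsf v_a(t(g))$, giving $\iota_{\overrightarrow a}\gamma=t^*\mathsf v_a$; realizing $\overleftarrow a(g)$ as $dm$ of $(0,di(a(s(g))))$ on the pair $(g,\epsilon(s(g)))$ gives $\iota_{\overleftarrow a}\gamma=-s^*\mathsf v_a$. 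The two auxiliary inputs, $\epsilon^*\gamma=0$ and $i^*\gamma=-\gamma$, I would first extract from $\delta\gamma=0$: pulling $m^*\gamma=\pr_1^*\gamma+\pr_2^*\gamma$ back along the unit degeneracy $g\mapsto(g,\epsilon(s(g)))$ forces $s^*\epsilon^*\gamma=0$, hence $\epsilon^*\gamma=0$, and pulling back along $g\mapsto(g,i(g))$ then forces $i^*\gamma=-\gamma$. Summing the two contributions yields $\iota_{\bar{\mathbf a}}\gamma=t^*\mathsf v_a-s^*\mathsf v_a=-\delta\mathsf v_a$, and Cartan's formula assembles the pieces into $L_{\bar{\mathbf a}}\gamma=\delta(\iota_{\mathbf a}B)-\delta(d\mathsf v_a)=\delta(\iota_{\mathbf a}B-d\mathsf v_a)$. (The signs follow the convention $\delta\eta=s^*\eta-t^*\eta$ on $\Omega^\bullet(X)$.)

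Finally, for independence of the curving I observe that $\mathsf v_a$ does not involve $B$, so for two curvings $B,B'$ it suffices to check $\iota_{\mathbf a}(B-B')=0$. By Remark~\ref{r:gauge-curving}, $B-B'=\pi^*\tau$ for a unique $\tau\in\Omega^2(M)$. Since $\calG$ is an $S^1$-central extension of the submersion groupoid $X\times_M X\toto X$, whose Lie algebroid is $\ker d\pi$ with inclusion anchor, the anchor of $A_P$ takes values in $\ker d\pi$; hence $\mathbf a=dt(a)$ is $\pi$-vertical and $\iota_{\mathbf a}\pi^*\tau=0$. Therefore $\iota_{\mathbf a}B-d\mathsf v_a$ is independent of the choice of $B$.
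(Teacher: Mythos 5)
Your argument is correct. For the main identity the paper itself only cites the proof of Proposition 3.16 in \cite{krepski2022multiplicative}, so your Cartan-formula computation ($L_{\bar{\mathbf a}}\gamma=\iota_{\bar{\mathbf a}}\delta B+d\iota_{\bar{\mathbf a}}\gamma$, using that $\bar{\mathbf a}$ is both $s$- and $t$-related to $\mathbf a$ and that multiplicativity gives $\iota_{\overrightarrow a}\gamma=t^*\mathsf v_a$, $\iota_{\overleftarrow a}\gamma=-s^*\mathsf v_a$, hence $\iota_{\bar{\mathbf a}}\gamma=-\delta\mathsf v_a$) is a valid self-contained replacement for that citation, and is presumably the same computation carried out there. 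Your proof of the independence of the curving is the paper's argument verbatim in substance: $\pi\circ s=\pi\circ t$ forces $\mathbf a=dt(a)\in\ker d\pi$, so $\iota_{\mathbf a}\pi^*\tau=0$. The only point to keep an eye on is the sign convention for $\delta$ on $\Omega^\bullet(X)$, which you correctly flag; with $\delta=s^*-t^*$ your signs reproduce the stated formula exactly.
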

\begin{proof}
The first claim is checked in the proof of \cite[Proposition 3.16]{krepski2022multiplicative}. To see that $\iota_{\mathbf{a}} B - d\mathsf{v}_a$ is independent of the curving $B$, by Remark \ref{r:gauge-curving}, it suffices to check that $\iota_{\mathbf{a}} \pi^*\tau=0$ for $\tau\in \Omega^2(M)$. By definition of $P\toto X$, $\pi \circ t = \pi \circ s$; therefore, $d\pi(dt(a))=d\pi(ds(a))=0$. That is, $\mathbf{a} \sim_\pi 0$ and the claim follows.
\end{proof}

Connection preserving multiplicative vector fields form a Lie 2-algebra $\mathbb{X}(\calG,\gamma)$, defined in the following Proposition.

\begin{proposition} \label{p:lie2-connexpres}
Let $(\calG,\gamma)$ be an $S^1$-bundle gerbe with connection on a manifold $M$. Let $\mathbb{X}(\calG, \gamma)$ denote the 2-term complex 
\[
\Gamma(A_P) \longrightarrow \{ (\mathbf{x},\mathbf{p},\alpha) \in \mathbb{X}(\mathbf{P})_0 \times \Omega^1(X) \, \big| \, L_\mathbf{p} \gamma = \delta \alpha \}
\]
with differential given by $\dd a= (\mathbf{a}, \overline{\mathbf{a}}, \iota_{\mathbf{a}} B - d\mathsf{v}_a)$ (with $B$ any curving for the connection $\gamma$). Define a bracket on elements of degree 0 by,
\[
[(\mathbf{x},\mathbf{p},\alpha),(\mathbf{z},\mathbf{r},\beta)] =
([\mathbf{x},\mathbf{z}],[\mathbf{p},\mathbf{r}],L_\mathbf{x}\beta - L_\mathbf{z} \alpha),
\]
while for mixed degree elements, set
\[
[(\mathbf{x},\mathbf{p},\alpha),a] = -[a,(\mathbf{x},\mathbf{p},\alpha)] =[\mathbf{p},\overrightarrow{a}] \big|_X\, .
\]
Then $\mathbb{X}(\calG,\gamma)$ is a strict Lie 2-algebra.
\end{proposition}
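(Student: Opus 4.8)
The plan is to verify that $\mathbb{X}(\calG,\gamma)$ satisfies the axioms of a strict Lie 2-algebra---equivalently, a crossed module of Lie algebras, i.e.\ the $2$-term $L_\infty$ axioms with vanishing Jacobiator. Concretely I must check: (i) the degree $0$ bracket is bilinear, antisymmetric, well-defined with respect to the constraint $L_{\mathbf{p}}\gamma=\delta\alpha$, and satisfies the Jacobi identity; (ii) the differential $\dd$ is well-defined and equivariant, $\dd[g,a]=[g,\dd a]$; (iii) $[\dd a,b]=[a,\dd b]$ for $a,b\in\Gamma(A_P)$; and (iv) the mixed-degree bracket defines a Lie algebra action, i.e.\ the mixed Jacobi identity holds.

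The main organizing observation is that forgetting the $\Omega^1(X)$-component defines a map of graded vector spaces $\mathbb{X}(\calG,\gamma)\to\mathbb{X}(\mathbf{P})$ onto the strict Lie 2-algebra of multiplicative vector fields, which is the identity on $\Gamma(A_P)$ and sends $(\mathbf{x},\mathbf{p},\alpha)\mapsto(\mathbf{x},\mathbf{p})$. By construction this map is compatible with all brackets and the differential on the first two components. Since the mixed bracket $[(\mathbf{x},\mathbf{p},\alpha),a]=[\mathbf{p},\overrightarrow{a}]\big|_X$ does not involve $\alpha$, axioms (iii) and (iv), together with the first-two-component parts of (i) and (ii), are inherited verbatim from the strict Lie 2-algebra $\mathbb{X}(\mathbf{P})$. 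Thus the only genuinely new content concerns the $\Omega^1(X)$-component.

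For well-definedness of the degree $0$ bracket, I would use that multiplicativity of $(\mathbf{x},\mathbf{p})$ makes $\mathbf{p}$ both $s$- and $t$-related to $\mathbf{x}$, whence $L_{\mathbf{p}}\circ\delta=\delta\circ L_{\mathbf{x}}$ on forms pulled back from $X$; then from $L_{\mathbf{p}}\gamma=\delta\alpha$ and $L_{\mathbf{r}}\gamma=\delta\beta$ one computes $L_{[\mathbf{p},\mathbf{r}]}\gamma=\delta(L_{\mathbf{x}}\beta-L_{\mathbf{z}}\alpha)$, so the bracket respects the constraint. The $\Omega^1(X)$-component of the Jacobi identity is the cyclic sum of $L_{[\mathbf{x}_1,\mathbf{x}_2]}\alpha_3-L_{\mathbf{x}_3}(L_{\mathbf{x}_1}\alpha_2-L_{\mathbf{x}_2}\alpha_1)$, which vanishes by the standard cancellation since $\mathbf{x}\mapsto L_{\mathbf{x}}$ is a representation of $\mathfrak{X}(X)$ on $\Omega^1(X)$; and well-definedness of $\dd$ (including independence of the curving) is exactly Lemma \ref{l:morphs-connex-pres}.

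The hard part is the $\Omega^1(X)$-component of the equivariance axiom (ii): writing $b=[\mathbf{p},\overrightarrow{a}]\big|_X$ and $\mathbf{b}=dt(b)=[\mathbf{x},\mathbf{a}]$, I must show $\iota_{\mathbf{b}}B-d\mathsf{v}_b=L_{\mathbf{x}}(\iota_{\mathbf{a}}B-d\mathsf{v}_a)-L_{\mathbf{a}}\alpha$. Applying $\delta$ to both sides and using $L_{\mathbf{p}}\delta=\delta L_{\mathbf{x}}$, Lemma \ref{l:morphs-connex-pres}, and $\delta\alpha=L_{\mathbf{p}}\gamma$ shows both sides have the same image under $\delta$, so their difference lies in $\ker\delta=\pi^*\Omega^1(M)$; this is a useful consistency check but does not finish the argument, since $\delta$ is not injective. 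To pin the difference down to zero I would carry out a direct Cartan-calculus computation: expand $L_{\mathbf{x}}\iota_{\mathbf{a}}B=\iota_{\mathbf{a}}L_{\mathbf{x}}B+\iota_{\mathbf{b}}B$, and unwind $d\mathsf{v}_b$ using $\mathsf{v}_c=\epsilon^*\iota_{\overrightarrow{c}}\gamma$, the description of the right-invariant field associated to $[\mathbf{p},\overrightarrow{a}]$, the curving equation $\delta B=d\gamma$, and $\epsilon^*\gamma=0$. This is precisely the type of computation performed in the proof of \cite[Proposition 3.16]{krepski2022multiplicative} for the connection-and-curving-preserving case, and I expect the present identity to follow by the same manipulations, adapted to drop the constraint on $B$. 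Once this identity is established, all axioms hold with vanishing Jacobiator, so $\mathbb{X}(\calG,\gamma)$ is a strict Lie 2-algebra.
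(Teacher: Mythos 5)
Your overall strategy coincides with the paper's: all axioms not involving the $\Omega^1(X)$-component are inherited from the strict Lie 2-algebra $\mathbb{X}(\mathbf{P})$ and from \cite[Proposition 4.8]{krepski2022multiplicative}, and you correctly isolate the third component of the equivariance condition $\dd[(\mathbf{x},\mathbf{p},\alpha),a]=[(\mathbf{x},\mathbf{p},\alpha),\dd a]$ as the only genuinely new verification. The gap is in how you propose to close that step. Expanding the right-hand side with $b=[\mathbf{p},\overrightarrow{a}]\big|_X$ and $\mathbf{b}=[\mathbf{x},\mathbf{a}]$ gives
\[
L_\mathbf{x}(\iota_{\mathbf{a}}B - d\mathsf{v}_a) - L_{\mathbf{a}}\alpha
= \iota_{\mathbf{b}}B + \iota_{\mathbf{a}}\left(L_\mathbf{x}B - d\alpha\right) - d\left(L_\mathbf{x}\mathsf{v}_a + \iota_{\mathbf{a}}\alpha\right),
\]
so running ``the same manipulations adapted to drop the constraint on $B$'' leaves behind the residual term $\iota_{\mathbf{a}}(L_\mathbf{x}B-d\alpha)$. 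In the connection-and-curving-preserving case this term is controlled by the constraint relating $L_\mathbf{x}B$ and $d\alpha$; here there is no a priori reason for it to vanish, and your $\delta$-argument, as you yourself note, only places the discrepancy in $\pi^*\Omega^1(M)$ without killing it. ``I expect the identity to follow by the same manipulations'' is therefore exactly where the proof is incomplete.

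The missing observation---which is essentially the entire content of the paper's proof---is that the residual term vanishes for structural reasons: since $\delta B = d\gamma$ and $\delta\alpha = L_\mathbf{p}\gamma$, one has $\delta(L_\mathbf{x}B - d\alpha) = L_\mathbf{p}\,d\gamma - d\,L_\mathbf{p}\gamma = 0$, so $L_\mathbf{x}B - d\alpha = \pi^*\beta$ for some $\beta\in\Omega^2(M)$; and since $\mathbf{a}\sim_\pi 0$ (the fact you already invoke via Lemma \ref{l:morphs-connex-pres}), $\iota_{\mathbf{a}}\pi^*\beta = 0$. You have both ingredients in hand---the relation $L_\mathbf{p}\circ\delta=\delta\circ L_\mathbf{x}$ and the $\pi$-relatedness of $\mathbf{a}$ to zero---but you apply $\delta$ to the whole identity rather than to the single form $L_\mathbf{x}B - d\alpha$, and you never combine the two facts. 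With that one line inserted, your argument reduces to the verification in \cite[Proposition 3.16]{krepski2022multiplicative} exactly as the paper's does.
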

\begin{proof}
The proof is the same as that of \cite[Proposition 4.8]{krepski2022multiplicative}, save for the verification of the condition,
\[
\dd [(\mathbf{x},\mathbf{p},\alpha),a] = [(\mathbf{x},\mathbf{p},\alpha),\dd a].
\]
To check this, choose a curving $B$ and observe first that $\delta(L_\mathbf{x} B - d\alpha) = 0$ and hence there exists $\beta \in \Omega^2(M)$ with $\pi^*\beta = L_\mathbf{x} B - d\alpha$. Therefore, $\iota_{\mathbf{a}}(L_\mathbf{x} B - d\alpha) = \iota_{\mathbf{a}} \pi^*\beta =0$ since $\mathbf{a}\sim_\pi 0$ as observed in the proof of Lemma \ref{l:morphs-connex-pres}. The verification in \emph{loc.\ cit}.\ is now easily adapted.
\end{proof}

\section{The Courant algebroid and infinitesimal symmetries of bundle gerbes} \label{s:courant}

Let $(\calG,\gamma)$ be an $S^1$-bundle gerbe with connection over $M$ with underlying central $S^1$-extension $P\toto X$, and suppose $B$ is a curving for $\gamma$ with resulting 3-curvature $\chi\in \Omega^3(M)$.  In  Section \ref{ss:main}, we establish the main results of the paper.  Theorem \ref{t:butterflyCourant} gives an invertible butterfly between the Lie 2-algebra $\mathbb{X}(\calG,\gamma)$ of connection-preserving multiplicative vector fields on $\calG$ and  the Courant Lie 2-algebra $\mathbb{L}(C_\chi)$. In Theorem \ref{t:atiyah}, we also give an invertible butterfly between multiplicative vector fields $\mathbb{X}(\mathbf{P})$ on $\calG$ and the Atiyah Lie 2-algebra $\mathbb{A}(M,\chi)$. In Section \ref{ss:courant-gauge} we show these invertible butterfies are compatible with gauge transformations $\chi \mapsto \chi+d\tau$, where $\tau \in \Omega^2(M)$.

\subsection{Sections of the Courant algebroid as infinitesimal symmetries of a bundle gerbe} \label{ss:main}

Let $\calG = P\toto X$ be an $S^1$-bundle gerbe over $M$ and let $\gamma$ be a connection on $\calG$ and choose a curving $B$. Denote the resulting 3-curvature by $\chi \in \Omega^3(M)$. Below we construct an invertible butterfly between sections of the Courant algebroid and multiplicative vector fields on $\calG$ preserving the connection.

Let $F=\left\{(\mathbf{x}, \mathbf{p},\alpha; g) \in \mathbb{X}(\mathbf{P},\gamma)_0 \times C^\infty(X) \, \big| \,
\delta g = \iota_{\mathbf{p}} \gamma
\right \}$, and define the structure maps in the diagram below as follows.
\begin{equation}
\label{eq:butterflyF}
\begin{gathered}
\xymatrix@R=1em{
\Gamma(A_P) \ar[dd] \ar[dr]^-{\kappa} & & C^\infty(M) \ar[dd] \ar[dl]_-\lambda \\
& F\ar[dl]^-\sigma \ar[dr]_-\rho & \\
\mathbb{X}(\mathbf{P},\gamma)_0 & & \Gamma(TM\oplus T^*M)
}
\end{gathered}
\end{equation}

Let $\sigma = \pr_1$ denote the obvious projection. To define $\rho$, first note that $\delta (\alpha - \iota_\mathbf{x} B - dg) =0$, hence there exists a unique 1-form $\varepsilon \in \Omega^1(M)$ satisfying $\pi^*\varepsilon = \alpha - \iota_\mathbf{x} B - dg $. Set $\rho(\mathbf{x}, \mathbf{p},\alpha; g) = ({x},-\varepsilon)$, where ${x}$ is the vector field on $M$ onto which $\mathbf{x}$ projects. Finally, let $\lambda(f) = (0,0,0;\pi^*f)$, and
$\kappa(a)=(\dd a;-\mathsf{v}_a)$.

\begin{theorem} \label{t:butterflyCourant}
Let $(\calG,\gamma)$ be an $S^1$-bundle gerbe over $M$ with connection $\gamma$ and suppose $B$ is a curving for $\gamma$ with resulting 3-curvature $\chi\in \Omega^3(M)$. Let $F$ and the indicated structure maps be as above, and define a bracket on $F$ by the formula
\[
[(\mathbf{x},\mathbf{p}, \alpha;g) , (\mathbf{z},\mathbf{r}, \beta;h)]
= ([(\mathbf{x},\mathbf{p}, \alpha) , (\mathbf{z},\mathbf{r}, \beta)] ,
\frac{1}{2}\left( \iota_\mathbf{x} (\beta+dh) - \iota_\mathbf{z} (\alpha+dg) \right) ).
\]
Then $F$ defines an invertible butterfly $\mathsf{F}:\mathbb{X}(\calG,\gamma) \dashto \mathbb{L}(C_\chi)$.
\end{theorem}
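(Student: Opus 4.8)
The plan is to verify that the data $(F,[\cdot,\cdot]_F,\sigma,\rho,\kappa,\lambda)$ assembled in \eqref{eq:butterflyF} satisfies the butterfly axioms of \cite{noohi2013integrating} as recalled in \cite[Section 4.1]{krepski2022multiplicative}, and then to show the butterfly is \emph{invertible}, i.e.\ that both diagonals of \eqref{eq:butterflyF} are short exact sequences. I would organize the work into three blocks: (i) the ``complex and differential'' conditions along the two diagonals; (ii) the bracket conditions, namely that $\sigma$ and $\rho$ intertwine $[\cdot,\cdot]_F$ with the respective Lie $2$-algebra structures and that $\kappa,\lambda$ are appropriately equivariant; and (iii) the two exactness statements that upgrade the butterfly to an invertible one.

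Block (i) is essentially bookkeeping with the explicit formulas, and I expect it to go through routinely. Skew-symmetry of $[\cdot,\cdot]_F$ is immediate from the manifest antisymmetry of its $C^\infty(X)$-component and of the $\mathbb{X}(\calG,\gamma)$-bracket. Using $\pi^*\varepsilon = \alpha - \iota_\mathbf{x}B - dg$ and injectivity of $\pi^*$, direct substitution yields $\sigma\circ\kappa = \dd$ (the differential of $\mathbb{X}(\calG,\gamma)$) and $\rho\circ\lambda = \dd$ (that of $\mathbb{L}(C_\chi)$, since for $\lambda(f)$ one gets $\pi^*\varepsilon = -\pi^*df$), while $\sigma\circ\lambda = 0$ and $\rho\circ\kappa = 0$; the last holds because for $\kappa(a)$ the form $\alpha-\iota_\mathbf{a}B-dg$ vanishes identically and $\mathbf{a}\sim_\pi 0$, exactly as in the proof of Lemma \ref{l:morphs-connex-pres}. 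Injectivity of $\lambda$ and $\kappa$, together with the kernel identification $\ker\sigma=\im\lambda$ (functions $g$ with $\delta g=0$ descend to $M$), are equally direct.

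Block (ii) is the computational heart. That $\sigma=\pr_1$ intertwines brackets is immediate, since the first component of $[\cdot,\cdot]_F$ is by definition the $\mathbb{X}(\calG,\gamma)$-bracket. The substantive identity is that $\rho$ intertwines $[\cdot,\cdot]_F$ with the $\chi$-twisted Courant bracket: writing $\rho(\mathbf{x},\mathbf{p},\alpha;g)=(u,-\varepsilon)$ and applying $\pi^*$ to the candidate equality, the Lie-derivative and contraction terms reorganize by Cartan calculus, and crucially $\iota_\mathbf{z}\iota_\mathbf{x}\,dB=\pi^*(\iota_v\iota_u\chi)$ supplies precisely the twist $-\iota_v\iota_u\chi$ of Definition \ref{d:courant}, the remaining $B$-terms cancelling via $L_\mathbf{x}\iota_\mathbf{z}-\iota_\mathbf{z}L_\mathbf{x}=\iota_{[\mathbf{x},\mathbf{z}]}$. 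The equivariance conditions reduce to the mixed-degree brackets: for $\lambda$ one computes $[\lambda(f),(\mathbf{x},\mathbf{p},\alpha;g)]_F=\lambda(-\tfrac12\iota_u df)$, matching the mixed bracket of $\mathbb{L}(C_\chi)$, while for $\kappa$ one uses the mixed bracket $[\mathbf{p},\overrightarrow{a}]\big|_X$ of $\mathbb{X}(\calG,\gamma)$ together with the curving-independence of $\iota_\mathbf{a}B-d\mathsf{v}_a$ (Lemma \ref{l:morphs-connex-pres}). Finally, since $\mathbb{L}(C_\chi)$ is not strict, I would check that the Jacobiator of $[\cdot,\cdot]_F$ takes values in the abelian ideal $\im\lambda=\ker\sigma$ and corresponds under $\rho$ to the Courant Jacobiator $J$ of Definition \ref{d:courant}; this is the mechanism by which the weak structure of the target is recorded in the butterfly.

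Block (iii) is where I expect the genuine difficulty. Beyond the kernel computations of block (i), invertibility reduces to two surjectivity statements. For $\sigma$: given $(\mathbf{x},\mathbf{p},\alpha)\in\mathbb{X}(\mathbf{P},\gamma)_0$ one must solve $\delta g=\iota_\mathbf{p}\gamma$; multiplicativity of $\mathbf{p}$ and of $\gamma$ (so $\delta\gamma=0$) forces $\delta(\iota_\mathbf{p}\gamma)=0$, and solvability then follows from the acyclicity of the relevant simplicial (fundamental) complex of the submersion $\pi\colon X\to M$, as used in \cite{krepski2022multiplicative}. For $\rho$: given $(u,\nu)\in\Gamma(TM\oplus T^*M)$ one must produce a connection-preserving multiplicative vector field on $\mathbf{P}$ covering $u$, and then adjust $\alpha$ by a basic form $\pi^*\eta$ (which preserves $L_\mathbf{p}\gamma=\delta\alpha$ and shifts $\varepsilon$ by $\eta$) to realize $\nu$. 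Constructing such lifts from the connection $\gamma$, together with the dual statement $\ker\rho=\im\kappa$ (every vertical multiplicative vector field arises as $\dd a$ for some $a\in\Gamma(A_P)$), is the principal obstacle, and I would isolate these as separate lemmas, parallel to the lifting and exactness arguments underlying \cite[Theorem 5.1]{krepski2022multiplicative} and Collier's quasi-isomorphism for Hitchin--Chatterjee gerbes \cite{collier-PhDThesis2012}.
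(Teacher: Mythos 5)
Your proposal is correct and follows essentially the same route as the paper: the paper's proof observes that $F$ and its structure maps are nearly identical to those of the prequantization butterfly in \cite[Theorem 5.1]{krepski2022multiplicative}, so the commutativity of the triangles and the exactness of both diagonals (your blocks (i) and (iii)) are inherited from \emph{loc.\ cit.}, leaving only the bracket compatibilities and the Jacobiator condition, which the paper likewise dispatches as routine Cartan-calculus computations. Your identification of the key identity $\iota_{\mathbf{z}}\iota_{\mathbf{x}}\,dB=\pi^*(\iota_v\iota_u\chi)$ producing the twist, and of the Jacobiator landing in $\im\lambda$ via $\lambda\circ J\circ\rho^{\otimes 3}$, matches the intended argument.
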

\begin{proof}
We note that the underlying vector space $F$ of the butterfly, together with the indicated structure maps, are almost identical to those appearing in \cite[Theorem 5.1]{krepski2022multiplicative}; therefore, the commutativity of the triangles in the diagram \eqref{eq:butterflyF} and the exactness of the diagonal sequences follows for the same reasons as in \emph{loc.\ cit.}
It remains to check the compatibility of the bracket with the various structure maps and the Jacobiator. These verifications are all routine computations using the Cartan calculus of differential forms.
\end{proof}

In \cite{rogers20132plectic}, Rogers exhibits an embedding of Lie 2-algebras $\mathsf{R}:\mathbb{L}(M,\chi) \hookrightarrow \mathbb{L}(C_\chi)$. In  \cite[Theorem 5.1]{krepski2022multiplicative}, the authors describe a \emph{prequantization butterfly}, an invertible butterfly $\mathsf{E}:\mathbb{L}(M,\chi) \dashto \mathbb{X}(\calG,\gamma, B)$, where $\mathbb{X}(\calG,\gamma,B)$ denotes the sub-Lie 2-algebra of $\mathbb{X}(\calG,\gamma)$ consisting of multiplicative vector fields preserving (both) the connection and curving of the bundle gerbe. Proposition \ref{p:CourantRogersKVCompat} below shows that the butterfly $\mathsf{F}$ from Theorem \ref{t:butterflyCourant} is compatible with Rogers' embedding and the prequantization butterfly $\mathsf{E}$.

\begin{proposition} \label{p:CourantRogersKVCompat}
Let $(\calG,\gamma)$ be an $S^1$-bundle gerbe over $M$ with connection $\gamma$ and suppose $B$ is a curving for $\gamma$ with resulting 3-curvature $\chi\in \Omega^3(M)$. Let $\mathsf{F}$ be as in Theorem \ref{t:butterflyCourant}.
Then the following diagram 2-commutes:
\[
\xymatrix{
\mathbb{L}(M,\chi) \ar^{\mathsf{R}}[r] \ar@{-->}_{\mathsf{E}}[d] & \mathbb{L}(C_\chi) \\
\mathbb{X}(\calG;B,\gamma) \ar[r] & \mathbb{X}(\calG,\gamma) \ar@{-->}_{\mathsf{F}}[u] \ultwocell\omit{}
}
\]
where the map $\mathsf{R}$ is Rogers' embedding, and $\mathsf{E}$ is the prequantization butterfly.
\end{proposition}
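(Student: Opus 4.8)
The strategy is to compute the composite butterfly along the lower-right of the square and show it is $2$-isomorphic to the butterfly representing $\mathsf{R}$. I would first make the two composable butterflies fully explicit. The butterfly $\mathsf{F}$ is given by $F$ and the maps of \eqref{eq:butterflyF}; the prequantization butterfly $\mathsf{E}\colon\mathbb{L}(M,\chi)\dashto\mathbb{X}(\calG;B,\gamma)$ of \cite[Theorem 5.1]{krepski2022multiplicative} has middle object $E$ of the same shape, its right leg $\rho_E$ being the projection to $\mathbb{X}(\calG;B,\gamma)_0$ and its left leg $\sigma_E$ sending $(\mathbf{x},\mathbf{p},\alpha;g)$ to the observable $(x,-\varepsilon)$ with $\pi^*\varepsilon=\alpha-\iota_\mathbf{x}B-dg$. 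The key observation that makes the composite manageable is that the two ``inner'' legs agree: under these identifications $\lambda_E=\kappa$ and $\kappa_E=\lambda$, where $\kappa,\lambda$ are the maps of \eqref{eq:butterflyF} (so $\lambda_E(a)=\kappa(a)=(\dd a;-\mathsf{v}_a)$ and $\kappa_E(f)=\lambda(f)=(0,0,0;\pi^*f)$). I would verify these coincidences first; they are forced by the defining exactness of the two butterflies together with Lemma \ref{l:morphs-connex-pres}.

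Next I would assemble the composite $\mathsf{F}\circ\iota\circ\mathsf{E}$, where $\iota\colon\mathbb{X}(\calG;B,\gamma)\to\mathbb{X}(\calG,\gamma)$ is the strict forgetful inclusion. As $\iota$ is the identity on $\Gamma(A_P)$ and merely corestricts $\rho_E$, the composite middle object is
\[
W=\bigl(E\times_{\mathbb{X}(\calG,\gamma)_0}F\bigr)\big/\Gamma(A_P),
\]
the fibre product along $\rho_E$ and $\sigma=\pr_1$, divided by the diagonally embedded $a\mapsto(\kappa(a),\kappa(a))$ (the diagonal is the only embedding landing in the fibre product, since $\rho_E\kappa=\sigma\kappa=\dd$). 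A point of the fibre product is a tuple $(\mathbf{x},\mathbf{p},\alpha;g,h)$ with shared $(\mathbf{x},\mathbf{p},\alpha)$ and $\delta g=\delta h=\iota_\mathbf{p}\gamma$; the diagonal shifts $g$ and $h$ by the same $-\mathsf{v}_a$, so $\delta(g-h)=0$ and $g-h=\pi^*k$ descends to $k\in C^\infty(M)$, while the observable $\sigma_E=(x,-\varepsilon)$ is unaffected because $\sigma_E\kappa=0$. This identifies $W\cong\mathbb{L}(M,\chi)_0\oplus C^\infty(M)$, with $\sigma_W=(x,-\varepsilon)$ and, writing $\pi^*\varepsilon_h=\alpha-\iota_\mathbf{x}B-dh$ so that $\varepsilon_h=\varepsilon+dk$, with $\rho_W=\rho_F=(x,-\varepsilon-dk)$; the maps $\kappa_W,\lambda_W$ are the evident inclusions of $C^\infty(M)$. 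Thus $W$ already has exactly the shape of the butterfly of an $L_\infty$-morphism $\mathbb{L}(M,\chi)\dashto\mathbb{L}(C_\chi)$ whose degree $0$ and degree $-1$ components are $\mathsf{R}_0(x,\beta)=(x,\beta)$ and $\mathsf{R}_1=\id$, and a short check confirms the two triangle identities and the exactness of the diagonals.

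It remains to match the remaining datum, the Lie bracket of $W$, with that of Rogers' butterfly. Since $\sigma_W$ is a Lie homomorphism, the observable component of $[\,\cdot,\cdot\,]_W$ is the Poisson bracket of Definition \ref{d:poisson-lie}; the only nontrivial content is its $C^\infty(M)$-component, which is the descent of $g'-h'$, the difference between the function components of the $E$- and $F$-brackets (this descends because both functions satisfy the constraint with the same $\mathbf{p}'=[\mathbf{p}_1,\mathbf{p}_2]$). Equivalently, choosing the section $k=0$ of $\sigma_W$, the homotopy $\phi_2$ of the represented morphism is the descent of $g'-h'$. The main obstacle is precisely this computation: one must insert the explicit bracket of \cite[Theorem 5.1]{krepski2022multiplicative} for $g'$ and the bracket of Theorem \ref{t:butterflyCourant} for $h'$, and show via the Cartan calculus and the Hamiltonian condition $\iota_x\chi=-d\beta$ that $g'-h'$ descends to Rogers' chain homotopy $\mathsf{R}_2\bigl((x_1,\beta_1),(x_2,\beta_2)\bigr)=\tfrac12\bigl(\iota_{x_1}\beta_2-\iota_{x_2}\beta_1\bigr)$; the bookkeeping of the $\tfrac12$-factors and of the curving terms $\iota_\mathbf{x}B$ is the delicate part. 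The same Cartan-calculus identity shows directly that $[\mathsf{R}_0\,\cdot,\mathsf{R}_0\,\cdot]-\mathsf{R}_0[\,\cdot,\cdot\,]=\dd\,\mathsf{R}_2$, so once $g'-h'$ is identified with $\mathsf{R}_2$ the Lie bracket on $W$ matches that on the butterfly $E_{\mathsf{R}}$ of $\mathsf{R}$ (the $\mathsf{R}_2$-twisted product $\mathbb{L}(M,\chi)_0\oplus C^\infty(M)$), and the resulting linear isomorphism $W\to E_{\mathsf{R}}$ commuting with all four legs is the required invertible $2$-morphism, establishing the $2$-commutativity.
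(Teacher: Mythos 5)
Your strategy is sound and would work, but it is organized differently from the paper's proof, and it stops short of the one computation that carries all the content. The paper exploits the invertibility of $\mathsf{E}$ to compare $\mathsf{R}\circ\mathsf{E}^{-1}$ with $\mathsf{F}\circ\mathsf{j}$, both butterflies out of $\mathbb{X}(\calG;\gamma,B)$: the middle object of $\mathsf{F}\circ\mathsf{j}$ is the restriction $F\big|_{\mathbb{X}(\mathbf{P},\gamma,B)_0}=E$, while that of $\mathsf{R}\circ\mathsf{E}^{-1}$ is $(E\oplus K_1)/L_1\cong E$ (using $L_1=K_1=C^\infty(M)$), so everything reduces to comparing two brackets on the single space $E$. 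You instead compose forward, forming $W=(E\times_{\mathbb{X}(\calG,\gamma)_0}F)/\Gamma(A_P)$ and identifying it with $C^\infty(M)\oplus\mathbb{L}(M,\chi)_0$, the middle object of $\mathsf{R}$. Your identification of $W$ is correct (surjectivity of $E\to\mathbb{L}(M,\chi)_0$ and the descent of $g-h$ to $k\in C^\infty(M)$ give the isomorphism, and the quotient by the diagonal $\Gamma(A_P)$ kills exactly the kernel), and the observation that the two inner wings match is the same fact the paper invokes when it notes that $F$ and its structure maps restrict to those of the prequantization butterfly. The two routes are of essentially equal difficulty; the paper's is marginally cleaner because it never forms the fibre-product-modulo-$\Gamma(A_P)$.

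The substantive issue is that you defer the decisive step: identifying the $C^\infty(M)$-component of the bracket on $W$ (your $g'-h'$) with Rogers' chain homotopy. This is precisely the calculation the paper writes out explicitly --- it records the induced bracket on $(E\oplus K_1)/L_1$, with the correction term $-\tfrac{1}{2}\pi^*(\iota_x\varpi-\iota_z\varepsilon)$ coming from $R\bigl((x,\beta),(z,\varphi)\bigr)=-\tfrac{1}{2}(\iota_x\varphi-\iota_z\beta)$, and then uses $\pi^*\varepsilon=\alpha-\iota_\mathbf{x}B-dg$, $\pi^*\varpi=\beta-\iota_\mathbf{z}B-dh$ to match it against the function component $\iota_\mathbf{x}\beta-\iota_\mathbf{z}\alpha+\iota_\mathbf{z}\iota_\mathbf{x}B$ of the bracket on $F\big|_{\mathbb{X}(\mathbf{P},\gamma,B)_0}$ versus $\tfrac{1}{2}\bigl(\iota_\mathbf{x}(\beta+dh)-\iota_\mathbf{z}(\alpha+dg)\bigr)$ from Theorem \ref{t:butterflyCourant}. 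Without this bookkeeping the proof is not complete, since this is where the $\tfrac{1}{2}$-factors, the curving terms $\iota_\mathbf{x}B$, and the signs must all cancel. On that last point, be careful: the paper's homotopy is $-\tfrac{1}{2}(\iota_x\varphi-\iota_z\beta)$, the opposite sign of the $\mathsf{R}_2$ you quote; whether this is absorbed by your choice of $g'-h'$ versus $h'-g'$ (equivalently, by the sign of the diagonal embedding of $\Gamma(A_P)$ in the fibre product) must be tracked explicitly, as a sign error here would produce a map that fails to be a morphism of butterflies.
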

\begin{proof}
We show the butterflies $\mathsf{R}\circ \mathsf{E}^{-1}$ and $\mathsf{F} \circ \mathsf{j}$ are isomorphic, where $\mathsf{j}$ denotes the inclusion $\mathbb{X}(\calG, \gamma, B) \to \mathbb{X}(\calG,\gamma)$.

Since the inclusion $\mathsf{j}$ is a strict morphism, the underlying vector space for the butterfly $\mathsf{F} \circ \mathsf{j}$ is simply the restriction $F\big|_{\mathbb{X}(\mathbf{P},\gamma,B)_0}$ (see \cite[Section 5.1]{noohi2013integrating}), which coincides with the underlying vector space $E$ of the prequantization butterfly $\mathsf{E}$.

The underlying chain map for Rogers' embedding $\mathbb{L}(M,\chi) \to \mathbb{L}(C_\chi)$ in our notation is given by inclusion in degree 0 and the identity in degree 1. Let $L_1\to L_0$ denote the underlying 2-term complex of $\mathbb{L}(M,\chi)$, and $K_1\to K_0$ the underlying 2-term complex of $\mathbb{L}(C_\chi)$.
Therefore, the underlying vector space for the corresponding butterfly $\mathsf{R}$ is $K_1 \oplus L_0$; hence for $\mathsf{R}\circ \mathsf{E}^{-1}$ it is
$E \mathrel{\substack{{L_1}\\\oplus\\{L_0}}} (K_1 \oplus L_0) \cong (E \oplus K_1)/L_1$ (quotient by diagonal image of $L_1$). Since $L_1=K_1$, we also have a natural isomorphism $ E \cong (E \oplus K_1)/L_1$ (inclusion into first summand) with inverse obtained by choosing a representative with trivial second summand.

The chain homotopy $R:L_0 \otimes L_0 \to K_1$ is given by
\[
R(({x},\beta),({z},\varphi)) = -\tfrac{1}{2} \left( \iota_{x} \varphi - \iota_{z} \beta \right).
\]
Therefore the induced bracket on $(E \oplus K_1)/L_1$ is given by
\begin{align*}
[(\mathbf{x},\mathbf{p}, & \alpha, g; f), (\mathbf{z},\mathbf{r},\beta,h;k)] \\
& = ([\mathbf{x},\mathbf{z}], [\mathbf{p},\mathbf{r}], L_\mathbf{x} \beta - L_\mathbf{y} \alpha, \iota_\mathbf{x} \beta - \iota_\mathbf{z} \alpha + \iota_\mathbf{z} \iota_\mathbf{x} B ;\tfrac{1}{2}( \iota_{x} (dk + \varpi) - \iota_{z} (df + \varepsilon) )
\end{align*}
where $({x},-\varepsilon)$ and $({z},-\varpi)$ denote elements in $L_0$ defined by
\begin{equation} \label{eq:epsilons}
\pi^*\varepsilon = \alpha - \iota_\mathbf{x} B - dg \quad \text{and} \quad
\pi^*\varpi = \beta - \iota_\mathbf{z} B - dh.
\end{equation}
Under the identification $(E\oplus K_1)/L_1 \cong E$, this reads
\begin{align*}
[(\mathbf{x},\mathbf{p}, &\alpha, g),(\mathbf{z},\mathbf{r},\beta,h)] \\
& = ([\mathbf{x},\mathbf{z}], [\mathbf{p},\mathbf{r}], L_\mathbf{x} \beta - L_\mathbf{y} \alpha, \iota_\mathbf{x} \beta - \iota_\mathbf{z} \alpha + \iota_\mathbf{z} \iota_\mathbf{x} B - \tfrac{1}{2}\pi^*( \iota_{x} \varpi - \iota_{z} \varepsilon) )
\end{align*}
Using \eqref{eq:epsilons}, we see this bracket agrees with the bracket on $F\big|_{\mathbb{X}(\mathbf{P},\gamma,B)_0}$.
\end{proof}

The butterfly in Theorem \ref{t:butterflyCourant} may be readily adjusted to give a similar butterfly $\mathsf{G}:\mathbb{X}(\mathbf{P}) \dashto \mathbb{A}(M,\chi)$. Indeed, let $G= \{ (\mathbf{x},\mathbf{p},g) \in \mathbb{X}(\mathbf{P})_0 \times C^\infty(X) \, | \, \delta g = \iota_{\mathbf{p}}\gamma\}$, and define the structure maps in the diagram below in the obvious way analogous to those in diagram \eqref{eq:butterflyF}.
\begin{equation}
\label{eq:butterflyG}
\begin{gathered}
\xymatrix@R=1em{
\Gamma(A_P) \ar[dd] \ar[dr]^-{\kappa} & & C^\infty(M) \ar[dd] \ar[dl]_-\lambda \\
& G\ar[dl]^-\sigma \ar[dr]_-\rho & \\
\mathbb{X}(\mathbf{P})_0 & & \mathfrak{X}(M)
}
\end{gathered}
\end{equation}
With this butterfly, we obtain the following Theorem, which is entirely analogous to Theorem \ref{t:butterflyCourant}. We omit the proof, since it uses the same methods and ideas as that of Theorem \ref{t:butterflyCourant}.

\begin{theorem} \label{t:atiyah}
Let $\calG = P\toto X$ be an $S^1$-bundle gerbe over $M$ with connection $\gamma$ and curving $B$, with resulting 3-curvature $\chi$.  Let $G$ and the indicated structure maps be as above, and define a bracket on $G$ by the formula
\[
[(\mathbf{x},\mathbf{p}, g) , (\mathbf{z},\mathbf{r}, h)]
= ([(\mathbf{x},\mathbf{p}) , (\mathbf{z},\mathbf{r})] ,
L_\mathbf{x} h - L_\mathbf{z} g - \iota_\mathbf{z}  \iota_\mathbf{x}  B  ).
\]
Then $G$ defines an invertible butterfly $\mathsf{G}:\mathbb{X}(\mathbf{P}) \dashto \mathbb{A}(M,\chi)$.

\end{theorem}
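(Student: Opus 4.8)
The plan is to follow the proof of Theorem~\ref{t:butterflyCourant} essentially verbatim, viewing $\mathsf{G}$ as the evident simplification of the butterfly $\mathsf{F}$: one discards the $1$-form slot $\alpha$ and retains only the vector-field output of $\rho$. Thus $\sigma=\pr_1$, $\rho(\mathbf{x},\mathbf{p},g)=x$ is the vector field on $M$ onto which $\mathbf{x}$ projects, $\lambda(f)=(0,0;\pi^*f)$, and $\kappa(a)=(\mathbf{a},\overline{\mathbf{a}};-\mathsf{v}_a)$. The commutativity of the two triangles in \eqref{eq:butterflyG} and the short exactness of the diagonal sequences then follow as in \emph{loc.\ cit.}, with one point worth isolating. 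Because the differential of $\mathbb{A}(M,\chi)$ vanishes, we have $\rho\circ\lambda=0$, so exactness at $G$ of the diagonal $\Gamma(A_P)\xrightarrow{\kappa}G\xrightarrow{\rho}\mathfrak{X}(M)$ requires one to verify $\im\lambda\subseteq\im\kappa$. This is supplied by the central section $\zeta\in\Gamma(A_P)$ generating the $S^1$-action: for $f\in C^\infty(M)$ one computes $\overline{(\pi^*f)\zeta}=(\delta\pi^*f)\,\partial_\theta=0$ and $\mathsf{v}_{(\pi^*f)\zeta}=\pi^*f$, so that $\kappa\big(-(\pi^*f)\zeta\big)=(0,0;\pi^*f)=\lambda(f)$.

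The substance of the proof, exactly as in Theorem~\ref{t:butterflyCourant}, is to check that the bracket is well defined, is compatible with the four structure maps, and reproduces the Atiyah Jacobiator. First I would verify well-definedness, i.e.\ that the $C^\infty(X)$-component $L_\mathbf{x}h-L_\mathbf{z}g-\iota_\mathbf{z}\iota_\mathbf{x}B$ lands in $G$. Since $(\mathbf{x},\mathbf{p})$ is multiplicative, $\mathbf{p}$ is $s$- and $t$-related to $\mathbf{x}$, so $\delta$ intertwines $L_\mathbf{x}$ with $L_\mathbf{p}$; using $\delta g=\iota_\mathbf{p}\gamma$, $\delta h=\iota_\mathbf{r}\gamma$ and $\delta B=d\gamma$ one gets $\delta(L_\mathbf{x}h-L_\mathbf{z}g-\iota_\mathbf{z}\iota_\mathbf{x}B)=L_\mathbf{p}\iota_\mathbf{r}\gamma-L_\mathbf{r}\iota_\mathbf{p}\gamma-\iota_\mathbf{r}\iota_\mathbf{p}\,d\gamma=\iota_{[\mathbf{p},\mathbf{r}]}\gamma$, the last equality being the Cartan identity $L_\mathbf{p}\iota_\mathbf{r}-L_\mathbf{r}\iota_\mathbf{p}-\iota_\mathbf{r}\iota_\mathbf{p}d=\iota_{[\mathbf{p},\mathbf{r}]}$ applied to the $1$-form $\gamma$. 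Compatibility with $\sigma$ and $\rho$ in degree $0$ is immediate, since $\sigma$ returns the first component $([\mathbf{x},\mathbf{z}],[\mathbf{p},\mathbf{r}])$ and $\rho$ returns $[x,z]$.

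For the mixed degrees, $\lambda$ is the easy case: $[(\mathbf{x},\mathbf{p},g),\lambda(f)]=(0,0;L_\mathbf{x}\pi^*f)=\lambda(L_xf)=\lambda\big([\rho(\mathbf{x},\mathbf{p},g),f]\big)$, using $L_\mathbf{x}\pi^*f=\pi^*L_xf$. For $\kappa$, the first two components of $[(\mathbf{x},\mathbf{p},g),\kappa(a)]$ agree with $\dd[(\mathbf{x},\mathbf{p}),a]$ by the strict Lie $2$-algebra structure of $\mathbb{X}(\mathbf{P})$ \cite{mackenzie1998classical,berwick2020lie,krepski2022multiplicative}, while the identity of $C^\infty(X)$-components, $L_\mathbf{x}(-\mathsf{v}_a)-L_\mathbf{a}g-\iota_\mathbf{a}\iota_\mathbf{x}B=-\mathsf{v}_{[\mathbf{p},\overrightarrow{a}]|_X}$, is a direct Cartan computation from $\mathsf{v}_a=\epsilon^*\iota_{\overrightarrow{a}}\gamma$ and Lemma~\ref{l:morphs-connex-pres}, as in the proof of Theorem~\ref{t:butterflyCourant}.

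The hard part is the Jacobiator condition, where the $3$-curvature enters and the non-strictness of $\mathbb{A}(M,\chi)$ is produced. As $\sigma$ is a homomorphism into the Lie algebra $\mathbb{X}(\mathbf{P})_0$, the Jacobiator of the $G$-bracket lies in $\ker\sigma=\im\lambda$, and one must identify it with $-\lambda\big(J_{\mathbb{A}}(x_1,x_2,x_3)\big)=(0,0;\pi^*\iota_{x_3}\iota_{x_2}\iota_{x_1}\chi)$. In the cyclic sum $\sum_{\mathrm{cyc}}[[e_1,e_2],e_3]$ the first component vanishes by Jacobi in $\mathbb{X}(\mathbf{P})_0$, and in the $C^\infty(X)$-component the purely function-theoretic terms $\sum_{\mathrm{cyc}}\big(L_{[\mathbf{x}_1,\mathbf{x}_2]}g_3-L_{\mathbf{x}_3}L_{\mathbf{x}_1}g_2+L_{\mathbf{x}_3}L_{\mathbf{x}_2}g_1\big)$ cancel by the Jacobi identity for Lie derivatives, leaving $\sum_{\mathrm{cyc}}\big(L_{\mathbf{x}_3}\iota_{\mathbf{x}_2}\iota_{\mathbf{x}_1}B-\iota_{\mathbf{x}_3}\iota_{[\mathbf{x}_1,\mathbf{x}_2]}B\big)$. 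This is precisely the intrinsic expression for $\iota_{\mathbf{x}_3}\iota_{\mathbf{x}_2}\iota_{\mathbf{x}_1}\,dB$, and since $dB=\pi^*\chi$ and each $\mathbf{x}_i$ is $\pi$-related to $x_i$ it equals $\pi^*\iota_{x_3}\iota_{x_2}\iota_{x_1}\chi$, as needed (the overall sign being fixed by the $L_\infty$ conventions of Definition~\ref{d:atiyah}). I expect this identity, together with the attendant sign bookkeeping, to be the only genuinely substantive step; the remainder is a routine transcription of the argument for Theorem~\ref{t:butterflyCourant}.
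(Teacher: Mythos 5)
Your proposal is correct and follows exactly the route the paper intends: the paper omits the proof of Theorem \ref{t:atiyah} precisely because it proceeds by the same deferral-plus-Cartan-calculus argument as Theorem \ref{t:butterflyCourant}, which is what you carry out. In fact you supply more detail than the paper does—in particular, correctly isolating the one structural difference from the Courant case (that $\rho\circ\lambda=0$ forces a separate check that $\im\lambda\subseteq\im\kappa$, which your computation with the central section $\zeta$ handles)—and your verifications of well-definedness of the bracket and of the Jacobiator identity via $dB=\pi^*\chi$ are the intended ``routine computations.''
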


In \cite{fiorenza2014algebras}, the authors give a morphism of Lie 2-algebras $\psi:\mathbb{L}(C_\chi) \to \mathbb{A}(M,\chi)$. The following Proposition, analogous to Proposition \ref{p:CourantRogersKVCompat}, shows the butterflies of Theorems \ref{t:butterflyCourant} and \ref{t:atiyah} are compatible with $\psi$. Since the Proposition is proved in the same manner as Proposition \ref{p:CourantRogersKVCompat}, we omit the proof.

\begin{proposition} \label{p:FRScompat}
Let $\calG$ be an $S^1$-bundle gerbe $P\toto X$ over $M$ with connection $\gamma$, curving $B$, and resulting  3-curvature $\chi\in \Omega^3(M)$. Let $\mathsf{F}$ be as in Theorem \ref{t:butterflyCourant} and  $\mathsf{G}$  as in Theorem \ref{t:atiyah}. Then the following diagram 2-commutes:
\[
\xymatrix{
\mathbb{L}(C_\chi) \ar^{\psi}[r] & \mathbb{A}(M,\chi) \\
\mathbb{X}(\calG,\gamma) \ar[r] \ar@{-->}^{\mathsf{F}}[u] & \mathbb{X}(\mathbf{P}) \ar@{-->}_{\mathsf{G}}[u] \ultwocell\omit{}
}
\]
\end{proposition}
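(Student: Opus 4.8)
The plan is to follow the proof of Proposition \ref{p:CourantRogersKVCompat} verbatim in spirit, exhibiting an isomorphism of the two butterflies $\psi\circ\mathsf{F}$ and $\mathsf{G}\circ\mathsf{k}$ from $\mathbb{X}(\calG,\gamma)$ to $\mathbb{A}(M,\chi)$, where $\mathsf{k}\colon\mathbb{X}(\calG,\gamma)\to\mathbb{X}(\mathbf{P})$ denotes the strict forgetful morphism $(\mathbf{x},\mathbf{p},\alpha)\mapsto(\mathbf{x},\mathbf{p})$ (and the identity on $\Gamma(A_P)$). First I would record the underlying data of the Fiorenza--Rogers--Schreiber morphism $\psi\colon\mathbb{L}(C_\chi)\to\mathbb{A}(M,\chi)$ in the present notation: its chain map is the projection $(u,\alpha)\mapsto u$ in degree $0$ and the identity on $C^\infty(M)$ in degree $1$, with chain homotopy $\psi_2((u,\alpha),(v,\beta))=-\tfrac12(\iota_u\beta-\iota_v\alpha)$. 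One checks this is forced by the mixed-degree bracket relations, since $[(u,\alpha),f]=\tfrac12\iota_u\,df$ in $\mathbb{L}(C_\chi)$ while $[u,f]=\iota_u\,df$ in $\mathbb{A}(M,\chi)$.

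Next I would identify the two underlying vector spaces. Since $\mathsf{k}$ is strict, the butterfly $\mathsf{G}\circ\mathsf{k}$ has underlying space the fibre product of $\sigma\colon G\to\mathbb{X}(\mathbf{P})_0$ with $\mathsf{k}_0$ over $\mathbb{X}(\mathbf{P})_0$ (as in \cite[Section 5.1]{noohi2013integrating}); because $\mathsf{k}_0$ simply restores the datum $\alpha$, this fibre product is canonically identified with $F$. On the other side, composing $\mathsf{F}$ with the butterfly of $\psi$ gives underlying space $(F\times_{K_0}(A_1\oplus K_0))/K_1\cong(F\oplus C^\infty(M))/C^\infty(M)$, where $K_1\to K_0$ is the complex of $\mathbb{L}(C_\chi)$ and $A_1=C^\infty(M)$ is the degree-$1$ part of $\mathbb{A}(M,\chi)$; exactly as in Proposition \ref{p:CourantRogersKVCompat}, choosing a representative with trivial extra summand yields a natural isomorphism with $F$. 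Under these identifications both butterflies carry the projection $\sigma=\pr_1$ to $\mathbb{X}(\calG,\gamma)$ and the map to $\mathbb{A}(M,\chi)$ sending $(\mathbf{x},\mathbf{p},\alpha;g)\mapsto x$, so the structure maps agree and it remains only to compare brackets.

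Finally I would compare the induced brackets on $F$. The degree-$0$ components agree on the nose, both equalling the $\mathbb{X}(\calG,\gamma)_0$-bracket $([\mathbf{x},\mathbf{z}],[\mathbf{p},\mathbf{r}],L_\mathbf{x}\beta-L_\mathbf{z}\alpha)$. For the $C^\infty(X)$-component, the bracket of $\mathsf{G}\circ\mathsf{k}$ is $L_\mathbf{x} h-L_\mathbf{z} g-\iota_\mathbf{z}\iota_\mathbf{x} B$ (from Theorem \ref{t:atiyah}), while that of $\psi\circ\mathsf{F}$ is the $\mathsf{F}$-bracket $\tfrac12(\iota_\mathbf{x}(\beta+dh)-\iota_\mathbf{z}(\alpha+dg))$ corrected by $-\pi^*\psi_2((x,-\varepsilon),(z,-\varpi))=-\tfrac12\pi^*(\iota_x\varpi-\iota_z\varepsilon)$, where $\pi^*\varepsilon=\alpha-\iota_\mathbf{x} B-dg$ and $\pi^*\varpi=\beta-\iota_\mathbf{z} B-dh$ as in \eqref{eq:epsilons}. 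Substituting these defining relations and using $\iota_\mathbf{x}\iota_\mathbf{z} B=-\iota_\mathbf{z}\iota_\mathbf{x} B$ together with $\iota_\mathbf{x} dh=L_\mathbf{x} h$ and $\iota_\mathbf{z} dg=L_\mathbf{z} g$, both expressions collapse to the common value $L_\mathbf{x} h-L_\mathbf{z} g-\iota_\mathbf{z}\iota_\mathbf{x} B$. I expect the main obstacle to be purely bookkeeping: correctly tracking the sign and the $\pi^*$ in the homotopy correction coming from $\psi_2$ through Noohi's composition, and confirming that the chosen representatives are compatible with the embeddings of $K_1$ via $\lambda$ and via $\kappa$ of the $\psi$-butterfly. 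Once the correction term is pinned down, the remaining verification is the routine Cartan-calculus identity above.
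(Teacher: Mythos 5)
Your proposal is correct and is exactly the argument the paper intends: the paper omits the proof, stating only that it proceeds "in the same manner as Proposition \ref{p:CourantRogersKVCompat}", and your write-up carries out precisely that template (identify both composite butterflies with $F$ via the pullback along the strict forgetful morphism on one side and the quotient $(F\oplus C^\infty(M))/C^\infty(M)\cong F$ on the other, then match brackets using the homotopy $\psi_2$ and the defining relations \eqref{eq:epsilons}). Your reconstruction of $\psi$ from the mixed-degree brackets and the final Cartan-calculus collapse to $L_\mathbf{x} h-L_\mathbf{z} g-\iota_\mathbf{z}\iota_\mathbf{x} B$ both check out.
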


\begin{remark} \label{r:Gcanon}
The butterfly $\mathsf{G}: \mathbb{X}(\mathbf{P}) \dashto \mathbb{A}(M,\chi)$  in Theorem \ref{t:atiyah} depends on a choice of connection $\gamma$; however, another choice of connection would yield a 2-isomorphic butterfly. Indeed, 
 another connection must be of the form $\gamma'=\gamma+ \delta \nu$, where $\nu \in \Omega^1(X)$, and the map $(\mathbf{x},\mathbf{p},g) \mapsto (\mathbf{x},\mathbf{p},g + \iota_{\mathbf{x}} \nu)$ gives the desired 2-isomorphism. 
\end{remark}

\begin{remark} \label{r:FRS}
In \cite[Proposition 5.2.6]{fiorenza2014algebras}, the authors prove a result similar to  Propositions \ref{p:CourantRogersKVCompat} and \ref{p:FRScompat}. In \emph{op.\ cit}., the authors model $S^1$-gerbes with \v{C}ech-Deligne cocycles, which are equivalent to the data of  bundle gerbes defined in terms of \v{C}ech data (i.e., with $X=\sqcup \,U_i$ where $\{U_i \}$ is an open cover of $M$.)  The resulting Lie 2-algebras of infinitesimal symmetries (preserving the appropriate connection data) are equivalent to those in \cite{collier-PhDThesis2012}, and they establish the corresponding quasi-isomorphisms of Lie 2-algebras  and 2-commuting diagrams.
\end{remark}

\subsection{Compatibility with gauge transformations} \label{ss:courant-gauge}

In this Section, we consider the compatibility of the quasi-isomorphisms in Theorems \ref{t:butterflyCourant} and \ref{t:atiyah} with \emph{gauge transformations}, $\chi \mapsto \chi+d\tau$, where $\tau \in \Omega^2(M)$. 

We begin with a Lemma showing gauge transformations leave the isomorphism class of the Courant Lie 2-algebra invariant.

\begin{lemma} \label{p:gauge}
Let $\tau \in \Omega^2(M)$, and let $\mathsf{T}_\tau:\mathbb{L}(C_\chi) \to \mathbb{L}(C_{\chi+d\tau})$ be defined by,
\[
(\mathsf{T}_\tau)_0(u,\alpha) = (u, \alpha + \iota_u \tau), \quad (\mathsf{T}_\tau)_1 = \mathsf{id}\, .
\]
Then the chain map $(\mathsf{T}_\tau)_\bullet$ is a (strict) isomorphism of Lie 2-algebras.
\end{lemma}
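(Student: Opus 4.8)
The plan is to verify directly that the proposed chain map $(\mathsf{T}_\tau)_\bullet$ intertwines all the structure of the two Lie 2-algebras, as specified in Definition \ref{d:courant}. Since $\mathsf{T}_\tau$ is manifestly a linear isomorphism in each degree (the identity in degree 1 and the affine shift $\alpha \mapsto \alpha + \iota_u\tau$ in degree 0, which is invertible with inverse $\mathsf{T}_{-\tau}$), the content is entirely in checking that it is a \emph{strict} morphism of Lie 2-algebras: it commutes with the differentials, preserves both the degree-0 and mixed-degree brackets, and carries the Jacobiator of $\mathbb{L}(C_\chi)$ to that of $\mathbb{L}(C_{\chi+d\tau})$.

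First I would check compatibility with the differentials. Since $\dd f = (0, df)$ in both complexes and the degree-1 map is the identity, one needs $(\mathsf{T}_\tau)_0(0,df) = (0,df)$, which holds because $\iota_0 \tau = 0$. Next, for the mixed-degree bracket, one computes $[(\mathsf{T}_\tau)_0(u,\alpha), f]$ in $\mathbb{L}(C_{\chi+d\tau})$ and compares with $(\mathsf{T}_\tau)_1$ applied to $[(u,\alpha),f]$ in $\mathbb{L}(C_\chi)$; both equal $\tfrac{1}{2}\iota_u df$, since the mixed bracket depends only on the degree-0 vector field component $u$, which $\mathsf{T}_\tau$ leaves unchanged. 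I expect the main obstacle to lie in the degree-0 bracket, where the twisting 3-form enters through the term $-\iota_v\iota_u\chi$. Here one must show
\[
(\mathsf{T}_\tau)_0\bigl([(u,\alpha),(v,\beta)]_\chi\bigr) = [(\mathsf{T}_\tau)_0(u,\alpha),(\mathsf{T}_\tau)_0(v,\beta)]_{\chi+d\tau},
\]
and the point is that the extra contributions produced by replacing $\alpha,\beta$ with $\alpha + \iota_u\tau, \beta + \iota_v\tau$ in the Lie-derivative and de Rham terms of the bracket conspire to produce exactly $-\iota_v\iota_u(d\tau)$, accounting for the change from $\chi$ to $\chi+d\tau$. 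The key computational identity is the Cartan-calculus expansion of $L_u\iota_v\tau - L_v\iota_u\tau - \tfrac12 d(\iota_u\iota_v\tau - \iota_v\iota_u\tau) - \iota_{[u,v]}\tau$, which I expect to reduce (using $L_u = d\iota_u + \iota_u d$ and $[L_u,\iota_v] = \iota_{[u,v]}$) to $-\iota_v\iota_u\, d\tau$, matching the difference of the twisted brackets.

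Finally, I would verify that the Jacobiators agree. Because the Jacobiator in Definition \ref{d:courant} is built from the symmetric pairing $\langle -,-\rangle^+$ applied to degree-0 brackets, and the symmetric pairing $\langle(u,\alpha),(v,\beta)\rangle^+ = \iota_u\beta + \iota_v\alpha$ picks up the shift, one must check that the total alteration under $\mathsf{T}_\tau$ cancels. This is where I expect a short but careful bookkeeping argument: the symmetrization inherent in $\langle -,-\rangle^+$ together with the antisymmetry of $\iota_v\iota_u\tau$ in $u,v$ should ensure the $\tau$-dependent corrections sum to zero over the cyclic permutations, so that $J_\chi$ and $J_{\chi+d\tau}$ are carried to one another. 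Once these four verifications are complete, $(\mathsf{T}_\tau)_\bullet$ is a strict isomorphism of Lie 2-algebras, with inverse $(\mathsf{T}_{-\tau})_\bullet$, establishing the Lemma.
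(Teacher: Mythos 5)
Your proposal is correct and follows essentially the same route as the paper, which likewise argues by direct verification (observing that $(\mathsf{T}_\tau)_\bullet$ is obviously a chain map and that $(\mathsf{T}_\tau)_0$ preserves the bracket and the pairing $\langle -,-\rangle^+$, whence the Jacobiator compatibility follows --- exactly your symmetrization remark). One small sign caveat: the Cartan-calculus expression you single out evaluates to $+\iota_v\iota_u\, d\tau$ (equivalently $-\iota_u\iota_v\, d\tau$), not $-\iota_v\iota_u\, d\tau$; this is precisely what is needed to cancel the term $-\iota_v\iota_u\, d\tau$ arising from replacing $\chi$ by $\chi+d\tau$ in the bracket, so the verification goes through as you intend.
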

\begin{proof}
This is proven in \cite{miti2022observables} for higher Courant algebroids. In this special case, it is  straightforward to verify directly. Indeed, it is obvious that $(\mathsf{T}_\tau)_\bullet$ is a chain map, and a direct calculation shows that $(\mathsf{T}_\tau)_0$ preserves brackets and the standard pairing; whence, the compatibility of Jacobiators follows.
\end{proof}

The Lie 2-algebra of connection preserving multiplicative vector fields on an $S^1$-bundle gerbe with 2-curvature $\chi$ is invariant under such gauge transformations. Indeed, by Remark \ref{r:gauge-curving}, a gauge transformation corresponds to a change in curving---in particular, the underlying bundle gerbe and connection remain the same. By Lemma \ref{l:morphs-connex-pres}, the Lie 2-algebras $\mathbb{X}(\calG,\gamma)$ resulting from the different curvings coincide.

\begin{proposition}\label{p:compatibility_gauge_transf}
Let $(\calG,\gamma)$ be an $S^1$-bundle gerbe over $M$ with connection $\gamma$. Suppose $B$ is a curving for $\gamma$ with resulting 3-curvature $\chi$, while $B'=B+\pi^*\tau$ is a curving for $\gamma$ with resulting 3-curvature $\chi+d\tau$. Let $\mathsf{F}$ and $\mathsf{F}'$ be the invertible butterflies in Theorem \ref{t:butterflyCourant} corresponding to the respective choices of curving, and let $\mathsf{T}_\tau:\mathbb{L}(C_\chi) \to \mathbb{L}(C_{\chi+d\tau})$ be as in Lemma \ref{p:gauge}. Then the diagram below 2-commutes:
\[
\xymatrix{
& \mathbb{L}(C_{\chi}) \ar[dd]^-{\mathsf{T}_\tau} \\
\mathbb{X}(\calG,\gamma) \ar@{-->}^{\mathsf{F}}[ur] \ar@{-->}_{\mathsf{F}'}[dr] \rtwocell\omit{}& \\
& \mathbb{L}(C_{\chi+d\tau})
}
\]
\end{proposition}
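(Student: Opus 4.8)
The plan is to exploit the fact that the two butterflies $\mathsf{F}$ and $\mathsf{F}'$ differ in only one of their four structure maps, and that $\mathsf{T}_\tau$ is a \emph{strict} isomorphism, so that postcomposition is especially transparent.

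First I would isolate exactly where the curving enters. The defining condition $\delta g = \iota_\mathbf{p}\gamma$ for the space $F$ involves only $\gamma$, and $\mathbb{X}(\mathbf{P},\gamma)_0$ is likewise independent of the curving; by Lemma \ref{l:morphs-connex-pres} the expression $\iota_\mathbf{a} B - d\mathsf{v}_a$, hence the differential $\dd a$ and the map $\kappa(a) = (\dd a; -\mathsf{v}_a)$, does not depend on $B$. The maps $\sigma = \pr_1$ and $\lambda(f) = (0,0,0;\pi^*f)$ plainly do not involve the curving, and the bracket on $F$ in Theorem \ref{t:butterflyCourant} is manifestly curving-independent. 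Consequently $\mathsf{F}$ and $\mathsf{F}'$ have the \emph{same} underlying space $F$, the same bracket, and the same maps $\sigma$, $\kappa$, $\lambda$; only $\rho$ versus $\rho'$ can differ.

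Next I would carry out the one genuine computation, comparing $\rho$ and $\rho'$. For $(\mathbf{x},\mathbf{p},\alpha;g)\in F$, let $\varepsilon,\varepsilon'\in\Omega^1(M)$ be the unique $1$-forms with $\pi^*\varepsilon = \alpha - \iota_\mathbf{x} B - dg$ and $\pi^*\varepsilon' = \alpha - \iota_\mathbf{x} B' - dg$. Since $B' = B + \pi^*\tau$, and $\mathbf{x}\sim_\pi x$ gives $\iota_\mathbf{x}\pi^*\tau = \pi^*(\iota_x\tau)$, subtracting yields $\pi^*\varepsilon' = \pi^*\varepsilon - \pi^*(\iota_x\tau)$; injectivity of $\pi^*$ then gives $\varepsilon' = \varepsilon - \iota_x\tau$. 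Hence
\[
\rho'(\mathbf{x},\mathbf{p},\alpha;g) = (x,-\varepsilon') = (x,-\varepsilon + \iota_x\tau) = (\mathsf{T}_\tau)_0(x,-\varepsilon) = (\mathsf{T}_\tau)_0\circ\rho(\mathbf{x},\mathbf{p},\alpha;g),
\]
so that $\rho' = (\mathsf{T}_\tau)_0\circ\rho$.

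Finally I would conclude using how a butterfly composes with a strict isomorphism. Because $\mathsf{T}_\tau$ is strict with $(\mathsf{T}_\tau)_1 = \id$, it carries no chain-homotopy component, so (unlike the situation in Proposition \ref{p:CourantRogersKVCompat}) the composite $\mathsf{T}_\tau\circ\mathsf{F}$ is canonically isomorphic, as a butterfly, to the one on the same underlying space $F$ obtained by leaving $\sigma$, $\kappa$ and the bracket unchanged, replacing the right leg $\rho$ by $(\mathsf{T}_\tau)_0\circ\rho$, and replacing $\lambda$ by $\lambda\circ(\mathsf{T}_\tau)_1^{-1} = \lambda$. By the previous paragraph the new right leg is exactly $\rho'$, while the remaining data already agree with those of $\mathsf{F}'$; hence $\mathsf{T}_\tau\circ\mathsf{F}$ and $\mathsf{F}'$ coincide and the triangle 2-commutes via the identity 2-morphism. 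The main obstacle is the bookkeeping in this last step: one must compute the composite fiber-product-quotient underlying space, identify it with $F$, and check that the induced legs and bracket are as claimed. This is routine but convention-sensitive, and it is considerably simpler here than in Proposition \ref{p:CourantRogersKVCompat} precisely because $\mathsf{T}_\tau$ has trivial homotopy component, so no correction analogous to the chain homotopy $R$ appears.
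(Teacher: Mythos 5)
Your proof is correct and follows essentially the same route as the paper: both identify the underlying space of $\mathsf{T}_\tau\circ\mathsf{F}$ with $F$ via the pushout $(F\oplus K_1)/K_1\cong F$ along $(\mathsf{T}_\tau)_1=\id$ and take the resulting identification as the morphism of butterflies. Your write-up usefully makes explicit the one verification the paper leaves implicit, namely that $\rho'=(\mathsf{T}_\tau)_0\circ\rho$ via $\varepsilon'=\varepsilon-\iota_x\tau$, and that all the other structure maps and the bracket are curving-independent.
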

\begin{proof} Recall that the underlying vector space of $\mathsf{F}'$ is the same as that for $\mathsf{F}$---denote this vector space by $F$ as in \eqref{eq:butterflyF}.

Since $\mathsf{T}_\tau$ is a strict morphism of Lie 2-algebras, the underlying vector space of the butterfly of the composition $\mathsf{T}_\tau \circ \mathsf{F}$ is given by a pushout along $(\mathsf{T}_\tau)_1=\id_{K_1}$ (see \cite[Section 5.1]{noohi2013integrating}), $ (F\oplus K_1)/K_1 \cong F$, where $K_1\to K_0$ denotes the underlying 2-term complex of $\mathbb{L}(C_\chi)$.
This identification gives the desired morphism of butterflies $\mathsf{T}_\tau \circ \mathsf{F} \Rightarrow \mathsf{F}'$.
\end{proof}

Similar to Lemma \ref{p:gauge}, we see that varying $\chi$ within its cohomology class does not change the isomorphism class of the Atiyah Lie 2-algebra.

\begin{lemma} \label{p:atiyah_gauge}
Let $\tau \in \Omega^2(M)$. The identity chain map $\mathbb{A}(M,\chi) \to \mathbb{A}(M,\chi+d\tau)$ with chain homotopy  $({x}_1, {x}_1) \mapsto \iota_{{x}_2} \iota_{{x}_1} \tau$ defines an isomorphism of Lie 2-algebras.
\end{lemma}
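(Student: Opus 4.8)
The plan is to unwind the defining conditions of an $L_\infty$-morphism (in the sense of \cite{baez2004higher}) for the identity chain map equipped with the proposed chain homotopy, and to observe that all but one of these conditions are vacuous. Write $\theta(x_1,x_2)=\iota_{x_2}\iota_{x_1}\tau$ for the chain homotopy, and note that $\mathbb{A}(M,\chi)$ and $\mathbb{A}(M,\chi+d\tau)$ share the same underlying $2$-term complex $C^\infty(M)\to\mathfrak{X}(M)$, the same (zero) differential $\dd$, and the same bracket; they differ only in their Jacobiators. Consequently the chain-map condition and the two bracket-compatibility conditions hold trivially: since $\dd=0$, the degree-zero condition reads $[x_1,x_2]-[x_1,x_2]=\dd\,\theta(x_1,x_2)=0$, and the mixed-degree condition reads $L_x f - L_x f = \theta(x,\dd f)=0$.

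First I would isolate the single nontrivial requirement, namely the coherence condition relating the two Jacobiators through $\theta$. For the identity chain map this condition takes the form
\[
J_{\chi+d\tau}(x,y,z)-J_{\chi}(x,y,z)
=\sum_{\mathrm{cyc}}\theta([x,y],z)-\sum_{\mathrm{cyc}}[x,\theta(y,z)],
\]
where the mixed-degree bracket on the right is $[x,\theta(y,z)]=L_x\theta(y,z)$. The left-hand side is immediately $-\iota_z\iota_y\iota_x\,d\tau$, straight from Definition \ref{d:atiyah}, since the two Jacobiators differ only by the replacement $\chi\rightsquigarrow\chi+d\tau$.

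The substance of the proof is then the verification that the right-hand side also equals $-\iota_z\iota_y\iota_x\,d\tau$. I would carry this out using the Koszul formula for the exterior derivative of a $2$-form (equivalently, the Cartan identities $L_x=d\iota_x+\iota_x d$ and $[L_x,\iota_y]=\iota_{[x,y]}$). Since $\theta(y,z)=\tau(y,z)$ is a function, $\sum_{\mathrm{cyc}}[x,\theta(y,z)]$ contributes the derivative terms $x\,\tau(y,z)+\mathrm{cyc.}$, while $\sum_{\mathrm{cyc}}\theta([x,y],z)$ contributes the bracket terms $\tau([x,y],z)+\mathrm{cyc.}$; with the signs dictated above these assemble, using $\tau(x,y)=-\tau(y,x)$, into precisely $-d\tau(x,y,z)=-\iota_z\iota_y\iota_x\,d\tau$. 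This confirms the coherence condition and hence that $(\mathrm{id},\mathrm{id},\theta)$ is a morphism of Lie 2-algebras.

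I expect the main obstacle to be purely sign bookkeeping: fixing the signs in the Jacobiator coherence condition for the chosen conventions and matching them against the Koszul formula. Once the coherence condition is confirmed, the isomorphism statement follows formally. The underlying chain map is the identity in both degrees, hence invertible, and the candidate inverse is the analogous morphism for the gauge $\tau\mapsto-\tau$, namely $\mathbb{A}(M,\chi+d\tau)\to\mathbb{A}(M,\chi)$ with chain homotopy $-\theta$. Because chain homotopies add under composition of morphisms whose chain maps are identities, both composites carry chain homotopy $\theta+(-\theta)=0$ and are therefore the respective identity morphisms; thus the morphism of the Lemma is an isomorphism of Lie 2-algebras.
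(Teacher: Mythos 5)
Your proof is correct and follows essentially the same route as the paper, whose entire argument is the remark that the chain-homotopy coherence condition reduces to the invariant (Koszul) formula for the exterior derivative of $\tau$. You simply spell out the reduction and the invertibility in more detail than the paper does.
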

\begin{proof}
That the above formula defines a chain homotopy follows immediately from the invariant formula for the exterior derivative.
\end{proof}

\begin{proposition}\label{p:atiyah_compatibility_gauge_transf}
Let $\calG = P\toto X$ be an $S^1$-bundle gerbe over $M$. Let  $\gamma$ be a connection for $\calG$. Suppose $B$ is a curving for $\gamma$ with resulting 3-curvature $\chi$, while $B'=B+\pi^*\tau$ is a curving for $\gamma$ with resulting 3-curvature $\chi+d\tau$. Let $\mathsf{G}$ and $\mathsf{G}'$ be the invertible butterflies in Theorem \ref{t:atiyah} corresponding to the respective choices of curving, and let $\mathsf{id}_\tau:\mathbb{A}(M,\chi) \to \mathbb{A}(M,{\chi+d\tau})$ denote the isomorphism in Lemma \ref{p:atiyah_gauge}. Then the diagram below 2-commutes:
\[
\xymatrix{
& \mathbb{A}(M,{\chi}) \ar[dd]^-{\mathsf{id}_\tau} \\
\mathbb{X}(\mathbf{P}) \ar@{-->}^{\mathsf{G}}[ur] \ar@{-->}_{\mathsf{G}'}[dr] \rtwocell\omit{}& \\
& \mathbb{A}(M,{\chi+d\tau})
}
\]
\end{proposition}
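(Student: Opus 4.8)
The plan is to follow the template of Proposition \ref{p:compatibility_gauge_transf}, modified to account for the fact that---unlike $\mathsf{T}_\tau$ in Lemma \ref{p:gauge}---the isomorphism $\mathsf{id}_\tau$ of Lemma \ref{p:atiyah_gauge} is \emph{not} strict: it carries the identity underlying chain map but a nontrivial chain homotopy $\phi_2(x_1,x_2)=\iota_{x_2}\iota_{x_1}\tau$. The argument will therefore more closely resemble that of Proposition \ref{p:CourantRogersKVCompat}, where a nonzero chain homotopy feeds into the induced bracket on a composite butterfly. Concretely, I would show that the composite butterfly $\mathsf{id}_\tau\circ\mathsf{G}$ is isomorphic to $\mathsf{G}'$.

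First I would record that $\mathsf{G}$ and $\mathsf{G}'$ share the very same underlying vector space $G$ and the very same structure maps $\sigma,\rho,\kappa,\lambda$: none of these depend on the choice of curving (the map $\rho$ only remembers the vector field $x$ on $M$ onto which $\mathbf{x}$ projects, and $\kappa$ involves $\mathsf{v}_a=\epsilon^*\iota_{\overrightarrow a}\gamma$, which depends on $\gamma$ alone). The two butterflies thus differ only in their brackets. Using $B'=B+\pi^*\tau$ together with the observation (as in the proof of Lemma \ref{l:morphs-connex-pres}) that a degree-$0$ element satisfies $\mathbf{x}\sim_\pi x$ and $\mathbf{z}\sim_\pi z$, one gets $\iota_\mathbf{z}\iota_\mathbf{x}\pi^*\tau=\pi^*(\iota_z\iota_x\tau)$, so that the $\mathsf{G}'$-bracket differs from the $\mathsf{G}$-bracket precisely by $-\pi^*(\iota_z\iota_x\tau)$ in the $C^\infty(X)$-component.

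Next I would form the composite butterfly $\mathsf{id}_\tau\circ\mathsf{G}$. The butterfly of $\mathsf{id}_\tau$ has underlying space $H=C^\infty(M)\oplus\mathfrak{X}(M)$, with $\phi_2$ encoded in its bracket, so (following \cite[Section 5.1]{noohi2013integrating}) the composite has underlying space $(G\times_{\mathfrak{X}(M)}H)/C^\infty(M)$. Since $\mathsf{id}_\tau$ has identity underlying chain map, the $C^\infty(M)$-action may be used to normalize the $H$-component (sending $g\mapsto g+\pi^*f$), identifying this quotient naturally with $G$; under this identification $\sigma$ and $\rho$ coincide with those of $\mathsf{G}'$. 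It then remains to compute the induced bracket and match it with $[\cdot,\cdot]_{B'}$: the only contribution beyond the $\mathsf{G}$-bracket is the homotopy term $-\pi^*\phi_2(x,z)=-\pi^*(\iota_z\iota_x\tau)=-\iota_\mathbf{z}\iota_\mathbf{x}\pi^*\tau$, which combines with $-\iota_\mathbf{z}\iota_\mathbf{x}B$ to give $-\iota_\mathbf{z}\iota_\mathbf{x}(B+\pi^*\tau)=-\iota_\mathbf{z}\iota_\mathbf{x}B'$. This is exactly the bracket of $\mathsf{G}'$, so the identification supplies the desired morphism of butterflies $\mathsf{id}_\tau\circ\mathsf{G}\Rightarrow\mathsf{G}'$, exhibiting the $2$-commutativity.

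The main obstacle I anticipate is the bookkeeping in this last step: correctly deriving the induced bracket on the composite butterfly---in particular pinning down the sign and the pullback in the contribution of $\phi_2$---and confirming that it reproduces exactly the curving change $B\mapsto B'$. Compatibility of Jacobiators should then be automatic, since the underlying spaces, structure maps, and brackets of $\mathsf{id}_\tau\circ\mathsf{G}$ and $\mathsf{G}'$ all agree under the identification; this is the same phenomenon already encountered in the proof of Proposition \ref{p:CourantRogersKVCompat}.
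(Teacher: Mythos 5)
Your proposal is correct and follows essentially the same route as the paper: the paper likewise forms the composite butterfly $G \mathrel{\substack{{A_1}\\\oplus\\{A_0}}} (A_1\oplus A_0)$ with the componentwise bracket carrying the homotopy term $\iota_{z}\iota_{x}\tau$, identifies it with $G$ via $(\mathbf{x},\mathbf{p},g)\mapsto[(\mathbf{x},\mathbf{p},g;0,x)]$, and checks that this map preserves brackets, which is exactly your computation that the normalized homotopy contribution converts $-\iota_\mathbf{z}\iota_\mathbf{x}B$ into $-\iota_\mathbf{z}\iota_\mathbf{x}(B+\pi^*\tau)$. Your sign and pullback bookkeeping, including $\iota_\mathbf{z}\iota_\mathbf{x}\pi^*\tau=\pi^*(\iota_{z}\iota_{x}\tau)$, agrees with the paper's.
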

\begin{proof}
The composition $\mathsf{id}_\tau \circ \mathsf{G}$ is the butterfly $G \mathrel{\substack{{A_1}\\\oplus\\{A_0}}} (A_1 \oplus A_0)$, where $A_1 \to A_0$ denotes the underlying 2-term complex for $\mathbb{A}(M,\chi)$ (and $\mathbb{A}(M,\chi+d\tau)$). The bracket is defined component-wise; on $G$, it is given in Theorem \ref{t:atiyah}, while on $A_1\oplus A_0$ (the butterfly for $\mathsf{id}_\tau$), it is given by 
\[
[(f,{x}),(g,{z})] = (L_{x} g - L_{z} f + \iota_{{z}} \iota_{{x}} \tau, [{x},{z}]).
\]
The butterfly $\mathsf{G}'$ is given by the same vector space $G$ as for $\mathsf{G}$, but with bracket
\[
[(\mathbf{x},\mathbf{p}, g) , (\mathbf{z},\mathbf{r}, h)]
= ([(\mathbf{x},\mathbf{p}) , (\mathbf{z},\mathbf{r})] ,
L_\mathbf{x} h - L_\mathbf{z} g - \iota_\mathbf{z}  \iota_\mathbf{x}  (B+\pi^*\tau)  ).
\]
Consider the natural isomorphism $\varphi:G\to G \mathrel{\substack{{A_1}\\\oplus\\{A_0}}} (A_1 \oplus A_0)$,  sending $(\mathbf{x},\mathbf{p},g)$ to the equivalence class of $ (\mathbf{x},\mathbf{p},g;0,{x})$ (where $\mathbf{x}$ descends to ${x}$).  A direct calculation shows that $\varphi$ preserves brackets.
\end{proof}

\section{Gauge transformations and homotopy moment maps} \label{s:moment-gauge}

As an application of the results in Section \ref{s:courant}, we present a geometric argument analogous to one appearing in \cite{miti2022observables} in they symplectic case, showing that 
 Rogers' embedding of Lie 2-algebras  $\mathsf{R}:\mathbb{L}(M,\chi) \hookrightarrow \mathbb{L}(C_\chi)$ is compatible with  gauge transformations $\chi \mapsto \chi + d\tau$, where $\tau \in \Omega^2(M)$ is $G$-invariant form, after pulling back to finite dimensional Lie algebras $\mathfrak{g}=\mathrm{Lie}(G)$ along homotopy moment maps.

To that end, let $(M,\chi)$ be a pre-2-plectic manifold (i.e., where $\chi \in \Omega^3(M) $ is closed) equipped with an action of a connected Lie group $G$ that preserves ${\chi}$. Suppose the $G$-action admits an homotopy moment map $\mathsf{J}_\chi:\mathfrak{g} \to \mathbb{L}(M,\chi)$ as in \cite{callies2016homotopy}.
Given a $G$-invariant 2-form $\tau \in \Omega^2(M)^G$, we define,
\[
(\mathsf{J}_{\chi+d\tau})_0 (\xi) = (\mathsf{J}_\chi)_0(\xi) + (0,\iota_{\xi_M} \tau)
\quad \text{and} \quad J_{\chi+d\tau} (\xi\otimes {\zeta}) = J_\chi(\xi \otimes \zeta) - \iota_{\xi_M} \iota_{\zeta_M} \tau,
\]
where $\xi_M$ denotes the generating vector field corresponding to $\xi \in \mathfrak{g}$.
A straightforward computation using Cartan calculus verifies that the above defines a Lie 2-algebra morphism (i.e., a homotopy moment map) $\mathsf{J}_{\chi+d\tau} :\mathfrak{g} \to \mathbb{L}(M,\chi+d\tau)$.

\begin{proposition} \label{p:moments}
Let $(\calG,\gamma)$ be an $S^1$-bundle gerbe over $M$ with curving $B$ whose 3-curvature is $\chi$.
Let $\mathsf{E}_\chi:\mathbb{L}(M,\chi) \dashto \mathbb{X}(\calG, \gamma)$ denote the composition of the prequantization butterfly $ \mathbb{L}(M,\chi) \dashto \mathbb{X}(\calG,\gamma,B)$ with inclusion into $\mathbb{X}(\calG,\gamma)$.
Then the following diagram 2-commutes:
\[
\xymatrix@C=4em{
\mathfrak{g} \ar[d]_-{J_{\chi+d\tau}} \ar[r]^-{J_\chi} \drtwocell\omit{} & \mathbb{L}(M,\chi) \ar@{-->}[d]^-{\mathsf{E}_\chi} \\
\mathbb{L}(M,\chi+d\tau) \ar@{-->}[r]_-{\mathsf{E}_{\chi+d\tau}} & \mathbb{X}(\calG,\gamma)
}
\]
\end{proposition}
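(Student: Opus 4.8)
The plan is to transport the whole square into the Courant Lie 2-algebra, where the gauge transformation is realized by the strict isomorphism $\mathsf{T}_\tau$ of Lemma \ref{p:gauge}, and then to reduce everything to a single identity involving Rogers' embedding and the two homotopy moment maps. Write $\mathsf{R}:\mathbb{L}(M,\chi)\to\mathbb{L}(C_\chi)$ and $\mathsf{R}':\mathbb{L}(M,\chi+d\tau)\to\mathbb{L}(C_{\chi+d\tau})$ for Rogers' embeddings, and let $\mathsf{F},\mathsf{F}'$ be the invertible butterflies of Theorem \ref{t:butterflyCourant} attached to the curvings $B$ and $B'=B+\pi^*\tau$. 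Since $\mathsf{F}'$ is invertible, and hence reflects $2$-isomorphisms, it suffices to prove that $\mathsf{F}'\circ\mathsf{E}_\chi\circ\mathsf{J}_\chi$ and $\mathsf{F}'\circ\mathsf{E}_{\chi+d\tau}\circ\mathsf{J}_{\chi+d\tau}$ are $2$-isomorphic as butterflies $\mathfrak{g}\dashto\mathbb{L}(C_{\chi+d\tau})$.

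First I would rewrite each side using the compatibility results already established. Proposition \ref{p:CourantRogersKVCompat}, applied to the curving $B$, gives $\mathsf{R}\cong\mathsf{F}\circ\mathsf{E}_\chi$, and the same Proposition applied to $B'$ gives $\mathsf{R}'\cong\mathsf{F}'\circ\mathsf{E}_{\chi+d\tau}$. Combining the former with Proposition \ref{p:compatibility_gauge_transf} (namely $\mathsf{T}_\tau\circ\mathsf{F}\cong\mathsf{F}'$) and pasting the associated $2$-cells, the left-hand composite becomes
\[
\mathsf{F}'\circ\mathsf{E}_\chi\circ\mathsf{J}_\chi\;\cong\;\mathsf{T}_\tau\circ\mathsf{F}\circ\mathsf{E}_\chi\circ\mathsf{J}_\chi\;\cong\;\mathsf{T}_\tau\circ\mathsf{R}\circ\mathsf{J}_\chi,
\]
while the right-hand composite becomes $\mathsf{F}'\circ\mathsf{E}_{\chi+d\tau}\circ\mathsf{J}_{\chi+d\tau}\cong\mathsf{R}'\circ\mathsf{J}_{\chi+d\tau}$. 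Thus the Proposition reduces to the purely Courant-theoretic claim that the two $L_\infty$-morphisms $\mathsf{T}_\tau\circ\mathsf{R}\circ\mathsf{J}_\chi$ and $\mathsf{R}'\circ\mathsf{J}_{\chi+d\tau}$ from $\mathfrak{g}$ to $\mathbb{L}(C_{\chi+d\tau})$ agree up to homotopy; this is the gerbe-free statement, and is the analogue of the computation carried out in \cite{miti2022observables}.

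Since $\mathfrak{g}$ sits in degree $0$, such a morphism is determined by its degree-$0$ component and its skew $2$-component, so I would simply compare these. In degree $0$ both morphisms send $\xi$ to $(\xi_M,\mu_\xi+\iota_{\xi_M}\tau)\in\Gamma(TM\oplus T^*M)$, where $(\mathsf{J}_\chi)_0(\xi)=(\xi_M,\mu_\xi)$: on the left $\mathsf{T}_\tau$ adds $\iota_{\xi_M}\tau$ to the $1$-form slot of the inclusion $\mathsf{R}(\xi_M,\mu_\xi)=(\xi_M,\mu_\xi)$, and on the right $\mathsf{R}'$ is the identity-on-functions inclusion applied to $(\mathsf{J}_{\chi+d\tau})_0(\xi)=(\xi_M,\mu_\xi+\iota_{\xi_M}\tau)$. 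For the $2$-components, using the composition rule for $L_\infty$-morphisms together with the chain homotopy $R(({x},\beta),({z},\varphi))=-\tfrac12(\iota_{x}\varphi-\iota_{z}\beta)$ recorded in the proof of Proposition \ref{p:CourantRogersKVCompat}, the right-hand $2$-component acquires both the term $-\iota_{\xi_M}\iota_{\zeta_M}\tau$ coming from $J_{\chi+d\tau}$ and the extra $\tau$-terms produced by evaluating $R$ on the shifted sections $(\xi_M,\mu_\xi+\iota_{\xi_M}\tau)$; these combine and cancel so that the result equals the left-hand $2$-component $J_\chi(\xi\otimes\zeta)-\tfrac12(\iota_{\xi_M}\mu_\zeta-\iota_{\zeta_M}\mu_\xi)$, which $\mathsf{T}_\tau$ leaves unchanged since it is the identity in degree $1$. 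Hence the two morphisms coincide on the nose.

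The routine part is the Cartan-calculus bookkeeping in this last cancellation, where the key point is that the $G$-invariance of $\tau$ (equivalently $L_{\xi_M}\tau=0$) is exactly what makes $\mathsf{J}_{\chi+d\tau}$ a genuine homotopy moment map and what forces the two $\iota_{\xi_M}\iota_{\zeta_M}\tau$ contributions to have matching coefficients. The main obstacle I anticipate is not this computation but the $2$-categorical bookkeeping in the reduction step: one must track the pushout/pullback presentations of the composite butterflies $\mathsf{T}_\tau\circ\mathsf{F}$, $\mathsf{F}\circ\mathsf{E}_\chi$, and $\mathsf{F}'\circ\mathsf{E}_{\chi+d\tau}$ (as in \cite[Section 5.1]{noohi2013integrating}) and verify that the three $2$-cells supplied by Propositions \ref{p:CourantRogersKVCompat} and \ref{p:compatibility_gauge_transf} paste coherently into a single $2$-isomorphism; once that is in hand, invertibility of $\mathsf{F}'$ delivers the desired $2$-commuting square.
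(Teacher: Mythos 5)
Your argument is correct in outline, but it takes a genuinely different route from the paper. The paper's proof is a short, direct comparison: both composites $\mathsf{E}_\chi\circ\mathsf{J}_\chi$ and $\mathsf{E}_{\chi+d\tau}\circ\mathsf{J}_{\chi+d\tau}$ are presented as butterflies with underlying vector space the fibre product $\mathfrak{g}\oplus_{L_0}E$ of the structure map $E\to L_0$ against the respective degree-$0$ moment map components, and the identity map on $\mathfrak{g}\oplus_{L_0}E$ is checked to be the required morphism of butterflies. No transport to the Courant side, no invertibility of $\mathsf{F}'$, and no use of Propositions \ref{p:CourantRogersKVCompat} or \ref{p:compatibility_gauge_transf} is needed. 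Your reduction --- post-compose with the equivalence $\mathsf{F}'$, use $\mathsf{T}_\tau\circ\mathsf{F}\cong\mathsf{F}'$ and $\mathsf{R}\cong\mathsf{F}\circ\mathsf{E}_\chi$, $\mathsf{R}'\cong\mathsf{F}'\circ\mathsf{E}_{\chi+d\tau}$ --- is valid bicategorical pasting and correctly reduces the Proposition to the gerbe-free identity $\mathsf{T}_\tau\circ\mathsf{R}\circ\mathsf{J}_\chi=\mathsf{R}'\circ\mathsf{J}_{\chi+d\tau}$. This is exactly the ``outer edge'' identity that the paper records \emph{after} Proposition \ref{p:moments} as an independent direct check of diagram \eqref{eq:MZ}; you are running the paper's logic in reverse, which is legitimate precisely because that identity admits a direct, integrality-free proof. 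What your route buys is a conceptual explanation (the square is forced by the Courant-side picture); what it costs is dependence on substantially more machinery and on a computation you only sketch.

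Two cautions. First, the heart of your argument is the claimed on-the-nose cancellation in the $2$-components of $\mathsf{T}_\tau\circ\mathsf{R}\circ\mathsf{J}_\chi$ and $\mathsf{R}'\circ\mathsf{J}_{\chi+d\tau}$; you assert that the $-\iota_{\xi_M}\iota_{\zeta_M}\tau$ from $J_{\chi+d\tau}$ cancels against the $\tau$-terms produced by $R$ on the shifted sections, but note that $R\bigl((\xi_M,\iota_{\xi_M}\tau),(\zeta_M,\iota_{\zeta_M}\tau)\bigr)=-\tfrac12\bigl(\iota_{\xi_M}\iota_{\zeta_M}\tau-\iota_{\zeta_M}\iota_{\xi_M}\tau\bigr)=-\iota_{\xi_M}\iota_{\zeta_M}\tau$ by antisymmetry of $\tau$, so the two contributions have the \emph{same} sign under the stated conventions and you must track the sign conventions for the composition of $L_\infty$-morphisms (and for $J_{\chi+d\tau}$) carefully before declaring the cancellation; this bookkeeping is the entire content of your proof and cannot be waved through. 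Second, the $G$-invariance $L_{\xi_M}\tau=0$ is needed so that $(\mathsf{J}_{\chi+d\tau})_0(\xi)=(\xi_M,\mu_\xi+\iota_{\xi_M}\tau)$ actually lands in the degree-$0$ part of $\mathbb{L}(M,\chi+d\tau)$ (i.e., $\iota_{\xi_M}(\chi+d\tau)=-d(\mu_\xi+\iota_{\xi_M}\tau)$); the cancellation in the $2$-component comparison itself uses only the antisymmetry of $\tau$, not invariance, so your attribution of the cancellation to invariance is slightly misplaced.
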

\begin{proof}
The composition,
\[
\xymatrix{
\mathfrak{g} \ar[r]^-{J_\chi} & \mathbb{L}(M,\chi) \ar@{-->}[r]^-{\mathsf{E}_\chi} & \mathbb{X}(\calG,\gamma)}
\]
is a butterfly with underlying vector space $\mathfrak{g} \oplus_{L_0} E$, the fibre product of the butterfly structure map $E \to L_0$ with the map $(\mathsf{J}_\chi)_0:\mathfrak{g} \to L_0$. Similarly, the composition
\[
\xymatrix{
\mathfrak{g} \ar[r]^-{J_{\chi+d\tau}} & \mathbb{L}(M,\chi+d\tau) \ar@{-->}[r]^-{\mathsf{E}_{\chi+d\tau}} & \mathbb{X}(\calG,\gamma).
}
\]
is a butterfly with underlying vector space $\mathfrak{g} \oplus_{L_0} E$, the fibre product of $E \to L_0$ with $(\mathsf{J}_{\chi + d\tau})_0:\mathfrak{g} \to L_0$. The identity map on $\mathfrak{g} \oplus_{L_0} E$ gives the desired morphism of butterflies.
\end{proof}

Thus, in the above setting, if $(M,\chi)$ admits a prequantization bundle gerbe with connection $(\calG,\gamma)$ (i.e., whose 3-curvature is $\chi)$, then by Propositions \ref{p:CourantRogersKVCompat}, \ref{p:compatibility_gauge_transf}, and \ref{p:moments}, the following diagram  2-commutes:
\begin{equation} \label{eq:MZ}
\begin{gathered}
\xymatrix{
& \mathbb{L}(M,\chi) \ar[rr]^-{\mathsf{R}} \ar@{-->}[rd]_{\mathsf{E}_{\chi}} &  & \mathbb{L}(C_\chi) \ar[dd]^{\mathsf{T}_\tau} \\
\mathfrak{g} \rrtwocell\omit{}  \ar[ru] ^-{J_\chi} \ar[rd]_-{J_{\chi+d\tau}} & & \mathbb{X}(\calG,\gamma) \utwocell\omit{} \rtwocell\omit{} \ar@{-->}[ru]_{\mathsf{F}} \ar@{-->}[rd]^{\mathsf{F}'} & \\
& \mathbb{L}(M,\chi+d\tau) \ar[rr]_{\mathsf{R}} \ar@{-->}[ru]
^{\mathsf{E}_{\chi+d\tau}} & \utwocell\omit{} & \mathbb{L}(C_{\chi+d\tau}) \\
}
\end{gathered}
\end{equation}
Note (\emph{cf}.\ Remark 1.6 in \cite{miti2022observables}) that one can check directly  that the two compositions of Lie 2-algebra morphisms along the outer edge of diagram \eqref{eq:MZ} agree  (without requirement that $\chi$ be integral). 
The 2-commutativity of diagram  \eqref{eq:MZ} gives a geometric interpretation to that observation, showing the two compositions are 2-isomorphic, which is analogous to the geometric argument appearing in 
\cite[Section 1]{miti2022observables} in the symplectic case.

\bibliographystyle{plain}

\end{document}